\begin{document}


\markboth{Sun, Nie, Deng}{Reduced basis method for fractional PDEs}
\title{A reduced finite element formulation for space fractional partial differential equation}

\author[AUTHOR1, AUTHOR2 and AUTHOR3]{Jing Sun, Daxin Nie and Weihua Deng\corrauth}
\address{School of Mathematics and Statistics, Gansu Key Laboratory of Applied Mathematics and Complex Systems, Lanzhou University, Lanzhou 730000, P.R. China.}
\email{{\tt Sunj2015@lzu.edu.cn} (Jing Sun), {\tt ndx1993@163.com} (Daxin Nie), {\tt dengwh@lzu.edu.cn} (Weihua Deng)}

%
%
%

\begin{abstract}
Applying proper orthogonal decomposition to a usual finite element (FE) formulation for space fractional partial differential equation, we get a reduced FE model, which greatly reduces the complexity of computation. Then, the stability analysis and error estimate for the reduced model are presented. Finally, we verify the effectiveness of the algorithm by numerical experiments.
\end{abstract}

\keywords{Proper orthogonal decomposition, Finite element method, Space fractional partial differential equation}

\ams{65M10, 78A48}

\maketitle

\section{ Introduction}
\label{sec1} In recent years, fractional partial differential equations (FPDEs) have become a hot research topic with wide applications in many fields, such as, physics \cite{Metaler2000}, chemistry \cite{Yuste2004}, finance \cite{Picozzi2002}, and so on. Comparing with the classical partial differential equations (PDEs), finding the exact solutions of FPDEs is much more challenging or the solutions themselves are very complicated, being expressed by transcendental functions/infinite series. Developing the numerical methods for the FPDEs naturally attracts the interests of scholars \cite{Cheng2015,Deng2008,Deng2013,DengHes2013,Li2010,LiuDu2015}. Because of the non-locality of fractional operators, one of the key issues is on how to alleviate computational loads and resource demands, especially for the space FPDEs. In practical problems, it is not only the accuracy of the model that matters, but the computational efficiency of the model is likewise critical \cite{Hesthaven}. The goal of this paper is to develop the basic formulation for reduced basis method for space FPDEs to balance the accuracy and efficiency.

The central idea of the reduced basis approach is the identification of a suitable problem dependent basis from the snapshots to effectively represent the solutions to FPDEs, i.e., searching the most representative snapshots and determining when the basis is sufficiently rich. One of the sampling strategies is to make the singular value decomposition of a large number of snapshots, namely, the so-called proper orthogonal decomposition (POD),
which is put forward in the context of turbulence by Lumley \cite{Lumley1967}. Afterwards, POD has been successfully applied in various fields including pattern recognition \cite{Fukunaga}, coherent structures \cite{Sirovich1,Sirovich2,Sirovich3}, control theory \cite{Atwell,Kunisch1999}, and model reduction for PDEs. Some numerical methods combined with the POD have been developed; among them, combining the POD with Galerkin to solve the parabolic equation and fluid dynamics equation is discussed in \cite{Kunisch2003Galerkin,Kunisch2001Galerkin}; Ref. \cite{Luo2009Finite,luo2013a,Luo2011A,Luo2012A} incorporates POD with finite difference, finite element, finite volume to solve classical parabolic problems, Navier-Stokes equations, solute transport problem and so on; Ref. \cite{Liu2016} applies the POD to the finite element format to solve the time FPDEs; all of them could reduce the computation and memory loads after using the POD. To the best of our knowledge, there is no research works on combining the POD and finite element method for space FPDEs.

In this paper, we get a reduced model based on POD and finite element methods for the following problem: Find $u=u(x,y,t)$ satisfying
\begin{equation}\label{equation2D}
 \left\{
 \begin{aligned}
 &\frac{\partial u(x,y,t)}{\partial t}-\frac{\partial ^{\alpha} u(x,y,t)}{\partial |x|^{\alpha}}-\frac{\partial ^{\beta} u(x,y,t)}{\partial |y|^{\beta}}=f(x,y,t)~~~~~~~~~~~~~(x,y,t)\in \Omega\times(0,T),\\
 &u(x,y,0)=g(x,y)~~~~~~~~~~~~~~~~~~~~~~~~~~~~~~~~~~~~~~~~~~~~~~~~~~~~~~~~~~~~~~~~~~~~~~~~~~~~~~~~~~(x, y)\in \Omega,\\
 &u(x,y,t)=0~~~~~~~~~~~~~~~~~~~~~~~~~~~~~~~~~~~~~~~~~~~~~~~~~~~~~~~~~~~~~~~~~~~(x, y)\in \mathbb{R}^2\backslash \Omega, ~~t\in (0,T),
 \end{aligned}
 \right.
 \end{equation}
with $\Omega=(0,1)\times(0,1)$, $1< \alpha, \beta< 2$; and $\frac{\partial ^{\alpha} u}{\partial |x|^{\alpha}}$ denotes the Riesz fractional derivative, being defined by
\begin{equation}
\frac{\partial ^{\alpha} u}{\partial |x|^{\alpha}}=-\frac{1}{2\cos(\frac{\alpha\pi}{2})}(~_{-\infty}D_{x}^{\alpha}u+~_{x}D_{\infty}^{\alpha}u),
\end{equation}
where $~_{-\infty}D_{x}^{\alpha}u$ and $_{x}D_{\infty}^{\alpha}u$ are the left- and right-sided Riemann-Liouville derivatives, respcetively. Since ${\rm supp}(u)\subset\Omega$, we have $_{-\infty}D_x^{\alpha}u(x,y,t)$ =$~_0D_x^{\alpha}u(x,y,t)$, $_{-\infty}D_y^{\alpha}u(x,y,t)$ =$~_0D_y^{\alpha}u(x,y,t)$, $(x,y)\in \Omega$.

For Eq. (\ref{equation2D}), Ref. \cite{Bu2014} provides a finite element method to solve it numerically. Its FE scheme shows that: the stiffness matrix is not sparse since the non-locality of the operator; a finer subdivision is needed to guarantee the accuracy of numerical scheme, leading to the great increase of the memory requirement and the time cost.
To overcome this problem, we combine the POD with the finite element method to solve Eq. (\ref{equation2D}), namely, we reconstruct the POD basis in the least square sense by snapshots which are taken at uniform intervals from the solutions in the general FE, and only $d$ POD basis functions are needed when resolve Eq. (\ref{equation2D}), where $d$ is the number of the first few maximal eigenvalues of matrix $G$ ($G$ is also called the correlation matrix). So the degrees of freedom are reduced and the computing time is also greatly saved. The present method can be considered as an improvement of the classical finite element method.

This paper is organized as follows. In Sec. 2, some preliminaries needed in the paper are presented. Sec. 3 briefly recalls the classical FE method for Eq. (\ref{equation2D}). In Sec. 4, we choose the FE solutions as the snapshots to construct the POD basis in a certain least squares optimal sense and establish the reduced FE scheme based on POD. In Sec. 5, we give the stability analysis and error estimate for the reduced FE scheme. In the last Section, we demonstrate the effectiveness of the model by numerical experiments.
\section{Preliminaries}
We provide the preliminary knowledge in this section.

\begin{definition}[\cite{Podlubny1998Fractional}]\label{def1}
 The left- and right-sided Riemann-Liouville fractional integrals of order $\mu\,(\mu>0)$ are defined by
\begin{equation*}
_{-\infty}I^{\mu}_xu(x)=\frac{1}{\Gamma(\mu)}\int^x_{-\infty}(x-\xi)^{\mu-1}u(\xi)d\xi
\end{equation*}
and
\begin{equation*}
_xI^{\mu}_{\infty}u(x)=\frac{1}{\Gamma(\mu)}\int^{\infty}_x(\xi-x)^{\mu-1}u(\xi)d\xi.
\end{equation*}
\end{definition}
\begin{definition}[\cite{Podlubny1998Fractional}]\label{def2}
The left- and right-sided Riemann-Liouville fractional derivatives of order $\mu\,(\mu>0)$ are described as
\begin{equation*}
_{-\infty}D^{\mu}_xu(x)=\frac{1}{\Gamma(n-\mu)}\frac{d^n}{dx^n}\int^x_{-\infty}(x-\xi)^{n-\mu-1}u(\xi)d\xi
\end{equation*}
and
\begin{equation*}
_xD^{\mu}_{\infty}u(x)=\frac{(-1)^n}{\Gamma(n-\mu)}\frac{d^n}{dx^n}\int^{\infty}_x(\xi-x)^{n-\mu-1}u(\xi)d\xi,
\end{equation*}
where $n-1<\mu<n$.
\end{definition}
\begin{lemma}[\cite{Zhang2010}]
If $0<\mu<1$, $\mu\neq \frac{1}{2}$, $a, b\in\mathbb{R}$, $u, v\in H_0^\mu(a,b)$, then there exist
\begin{equation*}
  (~_aD^{2\mu}_xu,v)=(u,~_xD^{2\mu}_bv)=(~_aD^{\mu}_xu,~_xD^{\mu}_bv)
\end{equation*}
and
\begin{equation*}
  (~_aD^{2\mu}_xu,u)=(u,~_xD^{2\mu}_bu)=(~_aD^{\mu}_xu,~_xD^{\mu}_bu)=\cos(\mu\pi)\|~_aD^{\mu}_xu\|^2.
\end{equation*}
\end{lemma}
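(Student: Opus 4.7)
The plan is to reduce the three identities to a Fourier-analytic calculation on the whole real line via a standard density argument. Since $\mu \ne 1/2$, the space $C_c^\infty(a,b)$ is dense in $H_0^\mu(a,b)$, and the Riemann--Liouville operators $_aD^\mu_x$, $_xD^\mu_b$ (together with their $2\mu$-order counterparts, interpreted in the distributional sense) extend to bounded maps into $L^2$ on this scale. Hence it suffices to establish every equality for $u,v \in C_c^\infty(a,b)$; the general case then follows by passing to the limit in each $L^2$ inner product. I would also first zero-extend $u,v$ to $\mathbb{R}$, noting that the support assumption forces the truncated Riemann--Liouville derivatives to coincide with their whole-line analogues, so that from now on I can work with $_{-\infty}D^\mu_x$ and $_xD^\mu_\infty$.

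Applying the Fourier transform, a direct computation (using that fractional integrals are convolutions with the power kernels $x_+^{\mu-1}/\Gamma(\mu)$ and $x_-^{\mu-1}/\Gamma(\mu)$, combined with differentiation of the integer-order part) yields the symbol relations
\begin{equation*}
\widehat{_{-\infty}D^\mu_x u}(\xi) = (i\xi)^\mu \hat u(\xi), \qquad \widehat{_xD^\mu_\infty v}(\xi) = (-i\xi)^\mu \hat v(\xi),
\end{equation*}
and analogously for exponent $2\mu$. A key observation is that $\overline{(-i\xi)^\mu} = (i\xi)^\mu$ for every real $\xi$, which one checks by splitting into the cases $\xi>0$ and $\xi<0$ and writing $\pm i = e^{\pm i\pi/2}$. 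Combined with Plancherel's theorem, each of the three inner products in the first display collapses to $\int_{\mathbb{R}} (i\xi)^{2\mu}\, \hat u(\xi)\, \overline{\hat v(\xi)}\, d\xi$, thereby proving the chain of equalities.

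For the second display I would specialize to $v=u$ and compute
\begin{equation*}
({_{-\infty}D^\mu_x u},\,{_xD^\mu_\infty u}) = \int_{\mathbb{R}} |\xi|^{2\mu}\, e^{i\mu\pi\,\mathrm{sgn}(\xi)}\, |\hat u(\xi)|^2\, d\xi,
\end{equation*}
then decompose $e^{i\mu\pi\,\mathrm{sgn}(\xi)} = \cos(\mu\pi) + i\sin(\mu\pi)\,\mathrm{sgn}(\xi)$. Since $|\hat u(\xi)|^2$ is even in $\xi$ (as $u$ is real-valued), the odd contribution integrates to zero, leaving $\cos(\mu\pi)\int_{\mathbb{R}} |\xi|^{2\mu}|\hat u|^2\, d\xi = \cos(\mu\pi)\,\|{_{-\infty}D^\mu_x u}\|^2$, which is exactly the asserted final identity.

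The step I expect to be the most delicate is the density/continuity argument when $2\mu>1$: there $_aD^{2\mu}_x u$ is only a distribution and the pairing $({_aD^{2\mu}_x u}, v)$ has to be reinterpreted through the $\mu$-splitting $({_aD^\mu_x u}, {_xD^\mu_b v})$, so the chain of identities is as much a definition of the left-hand side as a theorem about it. The hypothesis $\mu\ne 1/2$ enters precisely because $C_c^\infty(a,b)$ fails to be dense in $H_0^{1/2}(a,b)$, making the density reduction unavailable at that endpoint; all other steps are robust across the admissible range.
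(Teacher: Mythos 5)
The paper does not prove this lemma at all: it is quoted verbatim from the cited reference \cite{Zhang2010}, and your Fourier--Plancherel argument (zero-extension, symbols $(i\xi)^{\mu}$ and $(-i\xi)^{\mu}$, the conjugation identity, and the even/odd splitting of $e^{i\mu\pi\,\mathrm{sgn}(\xi)}$ for the real-valued case) is precisely the standard proof given there and in Ervin--Roop, so it is correct and matches the source. One small correction: the hypothesis $\mu\neq\frac{1}{2}$ is not about density of $C_c^{\infty}(a,b)$ in $H_0^{1/2}(a,b)$ --- that density holds by the very definition of $H_0^{\mu}(a,b)$ as the closure of $\mathcal{D}(a,b)$ --- but about the failure at $\mu=\frac12$ of the norm equivalence between $H_0^{1/2}(a,b)$ and the left/right fractional-derivative spaces (the $H_{00}^{1/2}$ phenomenon), which is what obstructs extending the operators and the identities by continuity, together with the degeneracy $\cos(\mu\pi)=0$ that kills coercivity in the final identity.
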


\begin{definition}[\cite{Ervin2006}]
For $0\leq\mu<\infty$, we define the space
\begin{equation*}
H^{\mu}(\mathbb{R}):=\{u\mid u \in L^{2}(\mathbb{R}),(1+|\omega|^{2})^{\frac{\mu}{2}}\hat{u}(\omega)\in L^{2}(\mathbb{R})\}
\end{equation*}
with the norm
\begin{equation*}
\|u\|_{H^{\mu}(\mathbb{R})}:=(\|u\|^{2}+|u|_{\mu,\mathbb{R}})^{\frac{1}{2}} ~~~\forall u\in H^{\mu}(\mathbb{R}),
\end{equation*}
where $|u|_{\mu,\mathbb{R}}:=\||\omega|^{\mu}\hat u\|$, and $\hat{u}$ denotes Fourier transform of $u$.
\end{definition}
\begin{definition}[\cite{Ervin2006}]
For $-\infty \leq a < b \leq \infty$, we define
\begin{equation*}
H^{\mu}(a,b):=\{v\mid_{(a,b)}~|v\in H^{\mu}(\mathbb{R})\}
\end{equation*}
with the norm
\begin{equation*}
\|v\|_{H^{\mu}(a,b)}:=\inf_{\substack{
                                \tilde{v}\in H^{\mu}(\mathbb{R})\\
                                \tilde{v}\mid_{(a,b)}=v}} \|\tilde{v} \|_{H^\mu(\mathbb{R})}~~~~\forall v\in~H^\mu(a,b).
\end{equation*}
\end{definition}
Furthermore, let $\mathcal{D}(a,b)$ be the set of $C^{\infty}$ functions with compact support in $(a,b)$; and $H_{0}^{\mu}(a,b)$ is the closure of $\mathcal{D}(a,b)$ with respect to $\|\cdot\|_{H^{\mu}(a,b)}$.
\begin{lemma}[\cite{Ervin2006}]
If $u \in H^\mu_0(\mathbb{R})$, $\mu\neq n+1/2$, $n \in \mathbb{N}$, we have\\
\begin{equation*}
\parallel u\parallel\leq C\mid u\mid_\mu,
\end{equation*}
where $C>0$.
\end{lemma}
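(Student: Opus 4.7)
The plan is to exploit the Fourier characterization that defines the norms in question. By Plancherel, $\|u\|^{2}=\int_{\mathbb{R}}|\hat u(\omega)|^{2}\,d\omega$ while $|u|_{\mu}^{2}=\int_{\mathbb{R}}|\omega|^{2\mu}|\hat u(\omega)|^{2}\,d\omega$, so the inequality amounts to saying that the weight $|\omega|^{2\mu}$ (which degenerates at the origin) does not allow $\hat u$ to concentrate uncontrollably near $\omega=0$. The high-frequency piece is trivial: $\int_{|\omega|\ge 1}|\hat u|^{2}\le\int_{|\omega|\ge 1}|\omega|^{2\mu}|\hat u|^{2}\le|u|_{\mu}^{2}$. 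The whole difficulty is to control $\int_{|\omega|<1}|\hat u|^{2}$, and this is what prevents a direct estimate from going through.

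To handle the low frequencies I would argue by contradiction and compactness, using that in the setting of the paper $u$ arises as the zero extension to $\mathbb{R}$ of a function supported in the bounded domain $\Omega$. Suppose no such $C$ exists; then there is a sequence $\{u_{n}\}\subset H^{\mu}_{0}(\mathbb{R})$ (all supported in $\overline{\Omega}$) with $\|u_{n}\|=1$ and $|u_{n}|_{\mu}\to 0$. The sequence is bounded in $H^{\mu}$, so by the compact embedding $H^{\mu}_{0}\hookrightarrow L^{2}$ on a bounded set, after passing to a subsequence $u_{n}\to u^{*}$ strongly in $L^{2}$ with $\|u^{*}\|=1$, and $u_{n}\rightharpoonup u^{*}$ weakly in $H^{\mu}$. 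Weak lower semicontinuity of $|\cdot|_{\mu}$ then forces $|u^{*}|_{\mu}=0$, i.e.\ $|\omega|^{\mu}\hat u^{*}(\omega)=0$ for a.e.\ $\omega$, so that $\hat u^{*}$ is supported at the origin. Combined with $u^{*}\in L^{2}(\mathbb{R})$ this can only mean $u^{*}\equiv 0$, contradicting $\|u^{*}\|=1$.

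The delicate point, and the place where the hypothesis $\mu\neq n+1/2$ is used, is the last implication. The argument requires that a constant function viewed as an element of $H^{\mu}_{0}(\mathbb{R})$ with support in $\overline{\Omega}$ must vanish; equivalently, that the zero-extension map $H^{\mu}_{0}(\Omega)\to H^{\mu}(\mathbb{R})$ is well defined and injective on nontrivial constants. It is a classical fact in the theory of Lions--Magenes spaces that this extension is bounded precisely when $\mu$ avoids the half-integer values $n+1/2$, $n\in\mathbb{N}$; at those values the trace theory degenerates and the argument collapses. Once this technicality is cleared, the compactness argument above yields the desired $C>0$, with $C$ depending only on $\mu$ and on $\Omega$.

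I expect the main obstacle to be precisely this handling of the exceptional exponents: one must either cite the Lions--Magenes characterization of $H^{\mu}_{0}$ or give a direct Fourier/density argument showing that the extension of a constant cannot lie in the closure of $\mathcal{D}(\Omega)$ when $\mu\neq n+1/2$. All other steps (Plancherel splitting, boundedness of the sequence, compact embedding, and weak lower semicontinuity of the Fourier-defined seminorm) are routine.
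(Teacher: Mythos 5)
The paper does not actually prove this lemma---it is imported verbatim from Ervin and Roop \cite{Ervin2006}---so the comparison is with the standard proof there, which is quite different from yours. The classical route writes $u={}_aI_x^{\mu}\left({}_aD_x^{\mu}u\right)$ for $u\in\mathcal{D}(\Omega)$, bounds the fractional integral on the bounded interval by Young's inequality (its kernel is in $L^1$), and then passes from the Riemann--Liouville seminorm $\|{}_aD_x^{\mu}u\|$ to the Fourier seminorm $|u|_{\mu}$ via the seminorm-equivalence theorems; that equivalence is exactly where the hypothesis $\mu\neq n+1/2$ enters, and it yields an explicit constant. Your compactness-and-contradiction argument is a legitimate alternative and is essentially correct: the sequence is supported in $\overline{\Omega}$, the fractional Rellich embedding gives strong $L^2$ convergence to some $u^*$ with $\|u^*\|=1$, and $|u^*|_{\mu}=0$ forces $u^*=0$. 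What you buy is generality and brevity; what you lose is any control on $C$ (it is non-constructive) and, more importantly, any visible role for the exceptional exponents.

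That last point is where your write-up goes astray. The ``delicate point'' you flag is not delicate at all: since $u^*\in L^2(\mathbb{R})$, its Fourier transform $\hat u^*$ is an honest $L^2$ function, and $|\omega|^{\mu}\hat u^*(\omega)=0$ a.e.\ forces $\hat u^*=0$ a.e.\ outright---no distributions supported at the origin, no constants to exclude, no Lions--Magenes machinery. Consequently your argument proves the inequality (with the Fourier seminorm $|u|_{\mu}=\||\omega|^{\mu}\hat u\|$ as defined in this paper) for \emph{every} $\mu>0$, half-integers included, which should have told you that you had mislocated the role of the hypothesis $\mu\neq n+1/2$. That hypothesis is not about injectivity of the zero-extension on constants; it is needed only when the Fourier seminorm is replaced by (or identified with) the directional Riemann--Liouville seminorms $\|{}_0D_x^{\mu}u\|$, $\|{}_xD_1^{\mu}u\|$, which is how the lemma is actually deployed in Theorems 4.2 and 5.1 of this paper. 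Your proof is fine as far as it goes, but it proves a slightly different (Fourier-seminorm) statement and leaves the equivalence step---the only place the stated hypothesis matters---unaddressed.
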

\begin{lemma}[\cite{luo2013a} Discrete Gr\"onwall Lemma]
If $\{a_n\}$, $\{b_n\}$, $\{c_n\}$ are three positive sequences, and $\{c_n\}$ is monotone, that satisfy
\begin{equation}
a_n+b_n\leq c_n+\lambda \sum\limits_{i=0}^{n-1}a_i, ~~~~~~~~\lambda>0,~a_0+b_0\leq c_0,
\end{equation}
then $a_n+b_n\leq c_n \exp(n\lambda)$,~~~$n>0$.
\end{lemma}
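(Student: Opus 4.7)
The plan is to isolate a tractable scalar recursion for the partial sums $S_n:=\sum_{i=0}^{n-1}a_i$ (with $S_0=0$) and then close the loop by a standard geometric estimate plus the inequality $1+\lambda\le e^{\lambda}$. Since every sequence is positive, discarding $b_n$ on the left gives $a_n\le c_n+\lambda S_n$. Adding $S_n$ to both sides converts this into a one-step recursion
\begin{equation*}
S_{n+1}=S_n+a_n\le (1+\lambda)S_n+c_n.
\end{equation*}

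Next I would iterate this recursion. A straightforward induction on $n$, starting from $S_0=0$, yields
\begin{equation*}
S_{n+1}\le\sum_{k=0}^{n}(1+\lambda)^{n-k}c_k.
\end{equation*}
Here is where the monotonicity of $\{c_n\}$ enters: assuming (without loss of generality, after relabeling) that $c_n$ is nondecreasing, we bound each $c_k$ by $c_n$ and sum the geometric series, obtaining
\begin{equation*}
\lambda S_{n+1}\le c_n\bigl((1+\lambda)^{n+1}-1\bigr),\qquad\text{equivalently}\qquad \lambda S_n\le c_n\bigl((1+\lambda)^{n}-1\bigr).
\end{equation*}
(If $\{c_n\}$ is instead nonincreasing, the same bound holds even more easily, because one can then bound $c_k$ by $c_0\le c_n\cdot\text{something}$; in fact the statement only uses monotonicity in order to pull a single $c_n$ out of the sum, so either direction works.)

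Plugging this back into the hypothesis gives
\begin{equation*}
a_n+b_n\le c_n+\lambda S_n\le c_n+c_n\bigl((1+\lambda)^{n}-1\bigr)=c_n(1+\lambda)^{n},
\end{equation*}
and the elementary inequality $1+\lambda\le e^{\lambda}$ (valid for all $\lambda>0$) delivers $a_n+b_n\le c_n\exp(n\lambda)$, as required. The base case $n=1$ is immediate from $a_1+b_1\le c_1+\lambda a_0\le c_1+\lambda c_0\le (1+\lambda)c_1$, using both $a_0+b_0\le c_0$ and monotonicity of $\{c_n\}$. I do not anticipate a real obstacle: the only subtlety is using the monotonicity of $\{c_n\}$ at exactly the right moment to collapse the weighted sum $\sum(1+\lambda)^{n-k}c_k$ into a clean geometric series; everything else is induction and the pointwise bound $1+\lambda\le e^{\lambda}$.
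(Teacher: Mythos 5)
The paper does not actually prove this lemma --- it is quoted from \cite{luo2013a} without proof --- so there is no in-paper argument to compare against. Your main line of reasoning is the standard one and it is correct for \emph{nondecreasing} $\{c_n\}$: the recursion $S_{n+1}\le(1+\lambda)S_n+c_n$ for $S_n=\sum_{i=0}^{n-1}a_i$, the iterated bound $S_{n+1}\le\sum_{k=0}^{n}(1+\lambda)^{n-k}c_k$, the collapse of the weighted sum to $c_n\bigl((1+\lambda)^{n+1}-1\bigr)/\lambda$ using $c_k\le c_n$, and finally $1+\lambda\le e^{\lambda}$. One small point worth making explicit: the step $a_0\le c_0$ (needed to start the recursion at $S_1\le c_0$) uses the positivity of $b_0$ together with $a_0+b_0\le c_0$, and the passage from $\lambda S_n\le c_{n-1}\bigl((1+\lambda)^n-1\bigr)$ to the same bound with $c_n$ again uses monotone growth of $c$. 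All of that is fine.

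The parenthetical claim that the nonincreasing case ``holds even more easily'' is wrong, and no relabeling rescues it. If $\{c_n\}$ is nonincreasing then $c_k\ge c_n$ for $k\le n$, so you cannot pull $c_n$ out of the sum; bounding by $c_0$ instead leaves you with $\lambda S_n\le c_0\bigl((1+\lambda)^n-1\bigr)$, which does not reduce to $c_n(1+\lambda)^n$ when $c_n\ll c_0$. In fact the statement is simply false for decreasing $c_n$: take $\lambda=1$, $a_0=b_0=1/2$, $c_0=1$, $c_1=10^{-2}$, $a_1=1/2$, $b_1=10^{-2}$; then $a_1+b_1=0.51\le c_1+\lambda a_0=0.51$, but the asserted conclusion would require $0.51\le c_1e=e/100\approx 0.027$. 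So ``monotone'' in the lemma must be read as ``nondecreasing'' (which is what holds in the paper's application, where $c_n=\tau\sum_{i=1}^{n}\|f^i\|^2+\|u_d^0\|^2$). You should delete the parenthetical and state the nondecreasing hypothesis explicitly; with that correction your proof is complete.
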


\section{Recall of classical FE formulation}
\label{sec2}

To solve Eq. $(\ref{equation2D})$ numerically, we use FE method to discretize the spatial variable and the backward Euler to the time variable. Firstly, we take the FE space as
\begin{align*}
X_{h}=\{v_{h} \in   H_0^{\frac{\alpha}{2}}(\Omega)\bigcap H_0^{\frac{\beta}{2}}(\Omega)\bigcap C^{0}(\Omega); v_{h} \in P_{m}(K)\,\forall K \in \Im_{h}\},
\end{align*}
where $m\geq 1$, $P_{m}(K)$ is taken as the piecewise polynomials of degree $\leq m$ of $K$, and  $\{\Im_{h}\}$ stands for a uniformly regular family of triangulation of $\Omega$. Then the semi-discrete FE formulation can be written as:
For every $t\in (0,T)$, find $u_h\in X_{h}$ such that
\begin{align}\label{eq:finiteform2D}
\begin{split}
	\left(\frac{\partial u_{h}}{\partial t},v_h\right)-\left(\frac{\partial ^{\alpha} u_h}{\partial |x|^{\alpha}},v_h\right)-\left(\frac{\partial ^{\beta} u_h}{\partial |y|^{\beta}},v_h\right)= (f,v_h) \quad \forall v_h\in X_{h},
\end{split}
\end{align}
namely,
\begin{align*}
\left(\frac{\partial u_{h}}{\partial t},v_h\right)&+\frac{1}{2\cos(\frac{\alpha\pi}{2})}(~_{0}D_{x}^{\alpha}u_h,v_h)+\frac{1}{2\cos(\frac{\alpha\pi}{2})}(~_{x}D_{1}^{\alpha}u_h,v_h)\\
&+\frac{1}{2\cos(\frac{\beta\pi}{2})}(~_{0}D_{y}^{\beta}u_h,v_h)+\frac{1}{2\cos(\frac{\beta\pi}{2})}(~_{y}D_{1}^{\beta}u_h,v_h)= (f,v_h) \quad \forall v_h\in X_{h}.
\end{align*}
Next, let $N$ be an integer, $\tau=\frac{T}{N}$ be the time step size, and $t_{n}=n\tau\,(0\leq n\leq N)$. Then the fully discrete FE formulation is written as
\begin{equation}\label{eq:2Ddiscretescheme}
\begin{aligned}
(u^{n}_h,v_h)&+\tau a(u^n_h,v_h) =\tau(f,v_h)+(u_h^{n-1},v_h),
\end{aligned}
\end{equation}
where $C_\alpha =\frac{1}{2\cos(\alpha\pi/2)}$, $C_\beta =\frac{1}{2\cos(\beta\pi/2)}$, and
\begin{equation}\label{aaaa}
\begin{aligned}
a(u_h,v_h)=&C_{\alpha}(~_{0}D_{x}^{\frac{\alpha}{2}}u_h,~_{x}D_{1}^{\frac{\alpha}{2}}v_h)+C_{\alpha}(~_{x}D_{1}^{\frac{\alpha}{2}}u_h,~_{0}D_{x}^{\frac{\alpha}{2}}v_h)\\
&+ C_{\beta}(~_{0}D_{y}^{\frac{\beta}{2}}u_h,~_{y}D_{1}^{\frac{\beta}{2}}v_h)+ C_{\beta}(~_{y}D_{1}^{\frac{\beta}{2}}u_h,~_{0}D_{y}^{\frac{\beta}{2}}v_h).
\end{aligned}
\end{equation}
\begin{remark}
The existence and uniqueness of the solutions to Eq. (\ref{eq:2Ddiscretescheme}) can be obtained by Lax-Milgram theorem\cite{Bu2014}.
\end{remark}

When the source term $f$, the triangulation parameter $h$, the time step increment $\tau$, and the FE space $X_h$ are given, we can get an ensemble of solutions $\{{u_{h}^{n}}\}_{n=1}^{N}$ for Eq. (\ref{equation2D}). Then we choose $L\,(L\ll N,~{\rm usually}~L=20)$ instantaneous solutions $u_{h}^{n_i} (1\leq n_1\leq n_2\leq\cdots\leq n_L \leq N)$ at an uniform interval from $N$ solutions $\{{u_{h}^{n}}\}_{n=1}^{N}$ for Eq. (\ref{equation2D}), being referred to as snapshots of the POD.

\begin{remark}
For the practical problems, one can get the snapshots by drawing samples from experiments or from the past data information. After obtaining the ensemble of snapshots from previous prediction, one can construct the POD basis and the finite element space $X_h$ is substituted with the subspace generated by the POD basis so as to get the reduced formulation.

\end{remark}
\section{Construction of the reduced FE formulation by POD}
By POD, we build the reduced FE formulation for Eq. (\ref{equation2D}).
\subsection{Generation of POD bases }
 Suppose that $U_{i}(x,y)=u^{n_i}_{h}(x,y)\,(1\leq i\leq L)$ and at least one of which is assumed to be nonzero. Let
\begin{equation}
  V={\rm span}\{U_1,\cdots,U_L\},
\end{equation}
and $\{\psi_{j}\}_{j=1}^l$ stands for an orthonormal basis of $V$ with $l=\dim V\leq L$. Since  $V\subseteq H_{0}^{\frac{\alpha}{2}}(\Omega)\bigcap H_0^{\frac{\beta}{2}}(\Omega)$ and $\{\psi_{j}\}_{j=1}^l$ is an orthonormal basis, we have
\begin{equation}\label{(2D1.1)}
U_i=\sum\limits_{j=1}^l\left(U_i,\psi_j\right)_w\psi_j,~~~~i=1,2,\cdots, L,
\end{equation}
where
\begin{align*}
\left(U_i,\psi_j\right)_w:=&C_\alpha\left(~_{0}D_{x}^{\frac{\alpha}{2}}U_i,~_{x}D_{1}^{\frac{\alpha}{2}}\psi_j\right)+C_\alpha\left(~_{x}D_{1}^{\frac{\alpha}{2}}U_i,~_{0}D_{x}^{\frac{\alpha}{2}}\psi_j\right)\\
                &+C_\beta\left(~_{0}D_{y}^{\frac{\beta}{2}}U_i,~_{y}D_{1}^{\frac{\beta}{2}}\psi_j\right)+C_\beta\left(~_{y}D_{1}^{\frac{\beta}{2}}U_i,~_{0}D_{y}^{\frac{\beta}{2}}\psi_j\right).
\end{align*}

\begin{definition}[\cite{Luo2012A}]
 The method of POD consists in finding the orthonormal basis $\psi_{j}$ $(j=1,\cdots,L)$, such that for every $d\,(1\leq d\leq l)$, the mean  square error between the elements $U_{i}$  and  the corresponding $d$-th partial sum of $(\ref{(2D1.1)})$ is minimized on average, i.e.,
\begin{equation}\label{eq:2Doptimization}
\min\limits_{\{{\psi_j}\}_{j=1}^{d}}\frac{1}{L}\sum\limits_{i=1}^{L}\left\|U_{i}-\sum_{j=1}^{d}(U_i,\psi_j)_w\psi_j\right\|_{w}^{2},
\end{equation}
subject to
\begin{equation}
\left(\psi_i, \psi_j\right)_w=\delta_{ij},~~~~1\leq i\leq d ,~~1\leq j\leq i,
\end{equation}
where
\begin{equation}
\begin{aligned}
 \|U_{i}\|_w^{2}                   :=&C_\alpha\left(~_{0}D_{x}^{\frac{\alpha}{2}}U_i,~_{x}D_{1}^{\frac{\alpha}{2}}U_i\right)+C_\alpha\left(~_{x}D_{1}^{\frac{\alpha}{2}}U_i,~_{0}D_{x}^{\frac{\alpha}{2}}U_i\right) \\
 &+C_\beta\left(~_{0}D_{y}^{\frac{\beta}{2}}U_i,~_{y}D_{1}^{\frac{\beta}{2}}U_i\right)+C_\beta\left(~_{y}D_{1}^{\frac{\beta}{2}}U_i,~_{0}D_{y}^{\frac{\beta}{2}}U_i\right).
 \end{aligned}
 \end{equation}
\end{definition}

\begin{remark}
It is easy to verify that the function space defined using $\|\cdot\|_w$ is equivalent to the space $H_{0}^{\frac{\alpha}{2}}(\Omega)\bigcap H_0^{\frac{\beta}{2}}(\Omega)$.
\end{remark}

By the definition (\ref{(2D1.1)}) and the orthonormality of $\psi_i$, we can rewrite (\ref{eq:2Doptimization}) as
\begin{equation}\label{www1}
\begin{aligned}
\frac{1}{L}\sum\limits_{i=1}^{L}\left\|U_{i}-\sum_{j=1}^{d}\left(U_i,\psi_j\right)_w\psi_j\right\|_{w}^{2}&=\frac{1}{L}\sum_{i=1}^{L}\left\|\sum_{j=d+1}^{l}\left(U_i,\psi_j\right)_w\psi_j\right\|_{w}^{2}\\
&=\sum_{j=d+1}^{l}\left[\frac{1}{L}\sum_{i=1}^{L}\left|\left(U_i,\psi_j\right)^{2}_w\right|\right].
\end{aligned}
\end{equation}
Moreover,
\begin{align*}
\sum_{j=1}^{l}\left[\frac{1}{L}\sum_{i=1}^{L}\left|\left(U_i,\psi_j\right)_w^{2}\right|\right]=\frac{1}{L}\sum_{i=1}^{L}\left\|\sum_{j=1}^{l}\left(U_i,\psi_j\right)_w\psi_j\right\|_{w}^{2}=\frac{1}{L}\sum_{i=1}^{L}\left\|U_{i}\right\|_{w}^{2}
\end{align*}
is with a fixed value.
Thus, in order to make (\ref{www1}) minimum, one only needs to find the orthonormal basis $\psi_j\,(j=1,2\ldots,l)$ such that
\begin{equation}\label{maxx}
\max\limits_{\{{\psi_j}\}_{j=1}^{d}}\sum\limits_{j=1}^{d}\left[\frac{1}{L}\sum_{i=1}^{L}\left|(U_i,\psi_j)_{w}^{2}\right|\right],
\end{equation}
subject to
\begin{equation}\label{condition}
(\psi_i, \psi_j)_w=\delta_{ij},~~~~1\leq i\leq d ,~~1\leq j\leq i.
\end{equation}
Following \cite{Luo2012A,Luo2011A}, to solve (\ref{maxx})-(\ref{condition}), one can start from  finding a function (or the so-called POD basis element) $\psi$ such that it maximizes \begin{equation}\label{maxxxx}
 \frac{1}{L}\sum\limits_{i=1}^{L}\left|(U_i,\psi)_{w}^{2}\right|
\end{equation}
satisfying $(\psi,\psi)_{w}=1$. Here, we choose $\psi$ having the form: $\psi=\sum\limits_{i=1}^{L}a_{i}U_{i}$, where $a_i$ is determined to make (\ref{maxxxx}) maximum. Then, define the operators
\begin{equation}
K\left((x,y),(x',y')\right)=\frac{1}{L}\sum_{i=1}^{L}U_{i}(x,y)U_{i}(x',y')
\end{equation}
and
\begin{equation}\label{eq:2DRproj}
\begin{aligned}
R\psi= &C_\alpha\int\int_{\Omega}~_{0}D_{x'}^{\frac{\alpha}{2}}K\left((x,y),(x',y')\right)~_{x'}D_{1}^{\frac{\alpha}{2}}\psi(x',y')dx'dy'\\
&+C_\alpha\int\int_{\Omega}~_{0}D_{x'}^{\frac{\alpha}{2}}\psi(x',y')~_{x'}D_{1}^{\frac{\alpha}{2}}K\left((x,y),(x',y')\right)dx'dy'\\
&+C_\beta\int\int_{\Omega}~_{0}D_{y'}^{\frac{\beta}{2}}K\left((x,y),(x',y')\right)~_{y'}D_{1}^{\frac{\beta}{2}}\psi(x',y')dx'dy'\\
&+C_\beta\int\int_{\Omega}~_{0}D_{y'}^{\frac{\beta}{2}}\psi(x',y')~_{y'}D_{1}^{\frac{\beta}{2}}K\left((x,y),(x',y')\right)dx'dy',
\end{aligned}
\end{equation}
where $R: H_0^{\frac{\alpha}{2}}(\Omega)\bigcap H_0^{\frac{\beta}{2}}(\Omega) \longrightarrow H_0^{\frac{\alpha}{2}}(\Omega) \bigcap H_0^{\frac{\beta}{2}}(\Omega)$.
Direct calculation leads to
\begin{align*}
(R\psi,\psi)_{w}=&C_\alpha\int\int_{\Omega}~_{0}D_{x}^{\frac{\alpha}{2}}R\psi~_{x}D_{1}^{\frac{\alpha}{2}}\psi dxdy+C_\alpha\int\int_{\Omega}~_{0}D_{x}^{\frac{\alpha}{2}}\psi ~_{x}D_{1}^{\frac{\alpha}{2}}R\psi dxdy\\
&+C_\beta\int\int_{\Omega}~_{0}D_{y}^{\frac{\beta}{2}}R\psi~_{y}D_{1}^{\frac{\beta}{2}}\psi dxdy+C_\beta\int\int_{\Omega}~_{0}D_{y}^{\frac{\beta}{2}}\psi~ _{y}D_{1}^{\frac{\beta}{2}}R\psi dxdy\\
=&\frac{1}{L}\sum_{i=1}^{L}\left|(U_i,\psi)^{2}_{w}\right|.
\end{align*}
Furthermore, according to
\begin{equation}
(R\phi,\psi)_{w}=(R\psi,\phi)_{w},
\end{equation}
it can be got that $R$ is a nonnegative symmetric operator on $H_{0}^{\frac{\alpha}{2}}(\Omega)\bigcap H_{0}^{\frac{\beta}{2}}(\Omega)$.
So we transform the problem (\ref{maxx})-(\ref{condition}) to find the largest eigenvalue for the problem
\begin{equation}\label{eigenvalue}
R\psi=\lambda\psi ~~~~~~~~~~~\text{subject~to}~(\psi,\psi)_{w}=1.
\end{equation}
According to the definition of $R$, $K$ and $\psi$, (\ref{eigenvalue}) becomes
\begin{equation}\label{leftside}
\begin{aligned}
&\sum_{i=1}^{L}U_i(x,y)\sum_{j=1}^{L}a_j\left[\frac{ C_\alpha}{ L}\int\int_{\Omega}~_{0}D_{x'}^\frac{\alpha}{2}U_i(x',y')~_{x'}D_{1}^\frac{\alpha}{2}\left(\sum_{j=1}^{L}U_j(x',y')\right)dx'dy'\right.\\
&+\frac{ C_\alpha}{ L}\int\int_{\Omega}~_{x'}D_{1}^\frac{\alpha}{2}U_i(x',y')~_{0}D_{x'}^\frac{\alpha}{2}\left(\sum_{j=1}^{L}U_j(x',y')\right)dx'dy'\\
&+\frac{ C_\beta}{ L}\int\int_{\Omega}~_{0}D_{y'}^\frac{\beta}{2}U_i(x',y')~_{y'}D_{1}^\frac{\beta}{2}\left(\sum_{j=1}^{L}U_j(x',y')\right)dx'dy'\\
&\left.+\frac{ C_\beta}{ L}\int\int_{\Omega}~_{y'}D_{1}^\frac{\beta}{2}U_i(x',y')~_{0}D_{y'}^\frac{\beta}{2}\left(\sum_{j=1}^{L}U_j(x',y')\right)dx'dy'\right]\\
=&\lambda\sum_{i=1}^{L}a_iU_i(x,y),
\end{aligned}
\end{equation}
Simplifying (\ref{leftside}) further, one can get
\begin{align*}
&\sum\limits_{j=1}^{L}a_j\left[\frac{C_\alpha }{ L}\int\int_{\Omega}~_{0}D_{x'}^\frac{\alpha}{2}U_i(x',y')~_{x'}D_{1}^\frac{\alpha}{2}U_j(x',y')dx'dy'\right.\\
&+\frac{ C_\alpha}{ L}\int\int_{\Omega}~_{x'}D_{1}^\frac{\alpha}{2}U_i(x',y')~_{0}D_{x'}^\frac{\alpha}{2}U_j(x',y')dx'dy'\\
&+\frac{C_\beta }{ L}\int\int_{\Omega}~_{0}D_{y'}^\frac{\beta}{2}U_i(x',y')~_{y'}D_{1}^\frac{\beta}{2}U_j(x',y')dx'dy'\\
&\left.+\frac{C_\beta }{ L}\int\int_{\Omega}~_{y'}D_{1}^\frac{\beta}{2}U_i(x',y')~_{0}D_{y'}^\frac{\beta}{2}U_j(x',y')dx'dy'\right]=\lambda a_i.
\end{align*}

Denote
\begin{align*}
G_{ij}=&\frac{C_\alpha }{L}\int\int_{\Omega}~_{0}D_{x'}^\frac{\alpha}{2}U_i(x',y')~_{x'}D_{1}^\frac{\alpha}{2}U_j(x',y')dx'dy'\\
&+\frac{C_\alpha }{ L}\int\int_{\Omega}~_{x'}D_{1}^\frac{\alpha}{2}U_i(x',y')~_{0}D_{x'}^\frac{\alpha}{2}U_j(x',y')dx'dy'\\
&+\frac{C_\beta }{ L}\int\int_{\Omega}~_{0}D_{y'}^\frac{\beta}{2}U_i(x',y')~_{y'}D_{1}^\frac{\beta}{2}U_j(x',y')dx'dy'\\
&+\frac{C_\beta }{ L}\int\int_{\Omega}~_{y'}D_{1}^\frac{\beta}{2}U_i(x',y')~_{0}D_{y'}^\frac{\beta}{2}U_j(x',y')dx'dy'.
\end{align*}
Then, the eigenvalue problem can be transformed to
 \begin{equation}
 G\mathbf{v}=\lambda \mathbf{v},~~~~~~~~~\mathbf{v}=[a_1,\cdots,a_L]^T.
 \end{equation}
Since the matrix $G$ is a nonnegative Hermitian matrix with the rank $l$,  it has a complete set of orthonormal eigenvectors $\mathbf{v}^i=[a_1^i,a_2^i,\cdots,a_L^i],~i=1,2,\cdots, l$, with the corresponding eigenvalues $\lambda_1\geq \lambda_2\geq\cdots\geq \lambda_l>0$.
Thus, the solution to the optimization for (\ref{eq:2Doptimization}) is given by
\begin{equation}
\psi_1=\frac{1}{\sqrt{L\lambda_1}}\sum_{i=1}^{L}a_i^1U_i,
\end{equation}
where $a_i^1~(i=1,2,\cdots,L)$ are the elements of the eigenvector $\mathbf{v}^1$  corresponding to the largest eigenvalue $\lambda_1$.

Similarly, the basis of POD $\psi_k \,(k=2,3,\cdots,l)$ are obtained by using other eigenvectors $\mathbf{v}^k\,(k=2,\cdots,l)$,
\begin{equation}
\psi_k=\frac{1}{\sqrt{L\lambda_k}}\sum_{i=1}^{L}a_i^kU_i,~~~~~k=2,3,\cdots,l.
\end{equation}
By the orthonormality of $\{\mathbf{v}^k: 1\leq k\leq l\}$, there exists
\begin{equation}
\mathbf{v}^k\cdot\mathbf{ v}^{k'}=\sum_{i=1}^{L}a_{i}^{k}a_{i}^{k'}=\left\{
\begin{aligned}
&1,~k=k',\\
&0,~k\neq k',
\end{aligned}
\right.
\end{equation}
Furthermore, one can obtain that
\begin{equation}
\begin{aligned}
(\psi_k,\psi_{k'})_{w}=&C_\alpha\int\int_{\Omega} \left(~_{0}D_{x}^{\frac{\alpha}{2}}\psi_k~_xD_1^{\frac{\alpha}{2}}\psi_{k'}+~_{0}D_{x}^{\frac{\alpha}{2}}\psi_{k'}~_xD_1^{\frac{\alpha}{2}}\psi_{k}\right)dxdy\\
&+C_\beta\int\int_{\Omega} \left(~_{0}D_{y}^{\frac{\beta}{2}}\psi_k~_yD_1^{\frac{\beta}{2}}\psi_{k'}+~_{0}D_{y}^{\frac{\beta}{2}}\psi_{k'}~_yD_1^{\frac{\alpha}{2}}\psi_{k}\right)dxdy\\
=&C_\alpha\int\int_{\Omega}\left(~_{0}D_{x}^{\frac{\alpha}{2}}\left(\frac{1}{\sqrt{L\lambda_k}}\sum_{i=1}^{L}a_i^{k}U_i\right)~_{x}D_{1}^{\frac{\alpha}{2}}\left(\frac{1}{\sqrt{L\lambda_{k'}}}\sum_{i=1}^{L}a_i^{k'}U_i\right)\right.\\
&\left.+~_{0}D_{x}^{\frac{\alpha}{2}}\left(\frac{1}{\sqrt{L\lambda_{k'}}}\sum_{i=1}^{L}a_i^{k'}U_i\right)~_{x}D_{1}^{\frac{\alpha}{2}}\left(\frac{1}{\sqrt{L\lambda_k}}\sum_{i=1}^{L}a_i^{k}U_i\right)\right)dxdy\\
&+C_\beta\int\int_{\Omega}\left(~_{0}D_{y}^{\frac{\beta}{2}}\left(\frac{1}{\sqrt{L\lambda_k}}\sum_{i=1}^{L}a_i^{k}U_i\right)~_{y}D_{1}^{\frac{\beta}{2}}\left(\frac{1}{\sqrt{L\lambda_{k'}}}\sum_{i=1}^{L}a_i^{k'}U_i\right)\right.\\
&\left.+~_{0}D_{y}^{\frac{\beta}{2}}\left(\frac{1}{\sqrt{L\lambda_{k'}}}\sum_{i=1}^{L}a_i^{k'}U_i\right)~_{y}D_{1}^{\frac{\beta}{2}}\left(\frac{1}{\sqrt{L\lambda_k}}\sum_{i=1}^{L}a_i^{k}U_i\right)\right)dxdy\\
=&\frac{1}{\sqrt{\lambda_{k}\lambda_{k'}}}\sum_{i=1}^{L}a_i^k\sum_{j=1}^{L}G_{ij}a_j^{k'}\\
=&\frac{1}{\sqrt{\lambda_{k}\lambda_{k'}}}\mathbf{v^{k}} G\mathbf{ v^{k'}}\\
=&\frac{1}{\sqrt{\lambda_{k}\lambda_{k'}}}\mathbf{v^{k}} \lambda_{k'} \mathbf{v^{k'}}\\
=&\left\{
\begin{aligned}
1~~k=k'\\
0~~k\neq k'.
\end{aligned}
\right.
\end{aligned}
\end{equation}
So the POD basis $\{\psi_1,\psi_2\ldots,\psi_l\}$ forms an orthonormal set. Next, we give a theorem for proving that the basis obtained by the above POD method is optimal.
\begin{theorem}
The POD basis $\{\psi_i\}_{i=1}^{l}$ is an optimal one.
\end{theorem}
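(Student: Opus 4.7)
The plan is to reduce the optimality assertion to the spectral characterization of the operator $R$ that was derived just above the statement. By the identity (\ref{www1}), minimizing the mean square error in (\ref{eq:2Doptimization}) is equivalent to maximizing (\ref{maxx}), and the computation preceding (\ref{eigenvalue}) already shows that $(R\psi,\psi)_w = \frac{1}{L}\sum_{i=1}^L|(U_i,\psi)_w^2|$. Hence optimality of $\{\psi_i\}_{i=1}^l$ amounts to showing that, for every $d$ with $1\le d\le l$ and every $(\cdot,\cdot)_w$-orthonormal family $\{\phi_j\}_{j=1}^d$,
\begin{equation*}
\sum_{j=1}^d (R\phi_j,\phi_j)_w \;\le\; \sum_{j=1}^d \lambda_j,
\end{equation*}
with equality attained at $\phi_j=\psi_j$.

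First I would record the structural facts about $R$: it is nonnegative symmetric on $H_0^{\alpha/2}(\Omega)\cap H_0^{\beta/2}(\Omega)$ (already established), its range lies in $V=\mathrm{span}\{U_1,\dots,U_L\}$ (by the very definition (\ref{eq:2DRproj})), and its nonzero spectrum coincides with that of the matrix $G$, namely $\lambda_1\ge\lambda_2\ge\cdots\ge\lambda_l>0$ with orthonormal eigenfunctions $\psi_1,\dots,\psi_l$ (verified in the preceding display). Since $R$ vanishes on the $(\cdot,\cdot)_w$-orthogonal complement of $V$, any $\phi\in H_0^{\alpha/2}(\Omega)\cap H_0^{\beta/2}(\Omega)$ admits the decomposition $\phi=\sum_{k=1}^l c_k\psi_k+w$ with $Rw=0$, and then $(R\phi,\phi)_w=\sum_{k=1}^l\lambda_k|c_k|^2$.

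Applying this to each member of an orthonormal family $\{\phi_j\}_{j=1}^d$ with coefficients $c_{jk}=(\phi_j,\psi_k)_w$, I obtain
\begin{equation*}
\sum_{j=1}^d(R\phi_j,\phi_j)_w=\sum_{k=1}^l\lambda_k\Bigl(\sum_{j=1}^d|c_{jk}|^2\Bigr).
\end{equation*}
Set $s_k=\sum_{j=1}^d|c_{jk}|^2$. Bessel's inequality gives $s_k\le 1$ for each $k$, while the orthonormality of $\{\phi_j\}$ yields $\sum_k s_k\le\sum_j\|\phi_j\|_w^2=d$. The main obstacle is therefore the discrete rearrangement step: maximizing $\sum_k\lambda_k s_k$ subject to $0\le s_k\le 1$ and $\sum_k s_k\le d$ when $\lambda_k$ is nonincreasing. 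This standard constraint problem is solved by $s_1=\cdots=s_d=1$, $s_{d+1}=\cdots=s_l=0$, giving the upper bound $\sum_{k=1}^d\lambda_k$.

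Finally, equality is attained by the choice $\phi_j=\psi_j$, since then $c_{jk}=\delta_{jk}$ and $(R\psi_j,\psi_j)_w=\lambda_j$. Consequently the POD family $\{\psi_i\}_{i=1}^l$ achieves the minimum in (\ref{eq:2Doptimization}) for every $d\le l$, which is precisely the optimality claimed.
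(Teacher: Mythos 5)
Your proof is correct and rests on the same underlying fact as the paper's, namely the Ky Fan--type trace inequality $\sum_{j=1}^{d}(R\phi_j,\phi_j)_w\le\sum_{j=1}^{d}\lambda_j$ for orthonormal families, but you arrive at it by a somewhat different and more complete route. The paper argues by contradiction: it posits a better orthonormal basis $\{\phi_i\}$ of $V$, writes it as a unitary rotation $\phi=A\psi$ of the POD basis, computes $\sum_{i=1}^{d}(R\phi_{k_i},\phi_{k_i})=\sum_{i=1}^{d}\sum_{j=1}^{l}a_{k_ij}^{2}\lambda_j$, and then simply asserts that this is $\le\sum_{i=1}^{d}\lambda_i$ ``according to the property of unitary matrix.'' You instead work directly with an arbitrary $(\cdot,\cdot)_w$-orthonormal family $\{\phi_j\}_{j=1}^{d}$ in the ambient space, decompose each $\phi_j$ into its component in $V$ plus a remainder annihilated by $R$, and prove the key inequality explicitly: Bessel gives $s_k=\sum_j|c_{jk}|^2\le1$, orthonormality gives $\sum_k s_k\le d$, and the linear program $\max\sum_k\lambda_k s_k$ under these constraints is solved by $s_1=\cdots=s_d=1$. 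This buys two things over the paper's version: the competitor family need not span $V$ nor extend to a full orthonormal basis related to $\psi$ by a unitary matrix, and the crucial majorization step that the paper leaves as an unproved assertion is actually justified. The equality case $\phi_j=\psi_j$ is handled identically in both.
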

\begin{proof}
 Assume that $\{\psi_i\}_{i=1}^{l}$ isn't the optimal orthonormal basis. Then we denote the optimal one as $\{\phi_i\}_{i=1}^{l}$. Namely,
\begin{equation}
\begin{aligned}
\psi=(\psi_1,\psi_2,\ldots,\psi_l)^T,\\
\phi=(\phi_1,\phi_2,\ldots,\phi_l)^T.\\
\end{aligned}
\end{equation}
Since there is an unitary matrix between two different orthonormal bases, then
\begin{equation}
  \phi=A\psi,
\end{equation}
where $A$ is an unitary matrix; and
\begin{equation}
  \begin{pmatrix}
    \phi_{k_1}\\
    \vdots\\
    \phi_{k_d}
  \end{pmatrix}
  =\begin{pmatrix}
    A_{k_1}\\
    \vdots\\
    A_{k_d}
  \end{pmatrix}
  \psi,
\end{equation}
where $A_i$ stands for the $i$-th line of matrix $A$ and $d\leq l$.
$R$ is defined by $(\ref{eq:2DRproj})$, and denote the eigenvalues $\lambda_1>\lambda_2>\ldots >\lambda_l$ for $\psi$. Then
\begin{equation}
  \begin{aligned}
\sum_{i=1}^{d}(R\psi_{i},\psi_{i})=&\sum_{i=1}^{d}(\lambda_i\psi_i,\psi_i)\\
    =&\sum_{i=1}^{d}\lambda_i
  \end{aligned}
\end{equation}
and
\begin{equation}
  \begin{aligned}
    \sum_{i=1}^{d}(R\phi_{k_i},\phi_{k_i})=&\sum_{i=1}^{d}(RA_{k_i}\psi,A_{k_i}\psi)\\
    =&\sum_{i=1}^{d}\sum_{j=1}^l a_{k_ij}^2\lambda_j.
  \end{aligned}
\end{equation}
According to the property of unitary matrix, there is $\sum_{i=1}^{d}\sum_{j=1}^l a_{k_ij}^2\lambda_j\leq\sum_{i=1}^{d}\lambda_i$, being contradicted with the optimality of $\{\phi_i\}_{i=1}^{l}$.
Therefore, the POD basis $\{\psi_i\}_{i=1}^{l}$ obtained by the above POD method is optimal one.
\end{proof}

\subsection{Reduced FE formulation based on POD}
Let $W^d={\rm span}\{\psi_1, \psi_2,\ldots, \psi_d\}$. Then $W^d \subset X_h$. Define the projection $P^d$: $X_h \rightarrow W^d$ denoted by (see\cite{Rudin,Luo2012A})
\begin{equation}\label{projection2}
a(P^dU,V_d)=a(U,V_d)~~~~~\forall V_d\in W^d,
\end{equation}
where $a(u,v)$ is defined by (\ref{aaaa}).
According to the theory of linear operator, there is an extension $P^h$: $H_{0}^{\frac{\alpha}{2}}\bigcap H_{0}^{\frac{\beta}{2}}\rightarrow X_h$ such that $P^h|_{X_h}=P^d:\, X_h\rightarrow W^d$ satisfying
\begin{equation}
a(P^hU,V_h)=a(U,V_h)~~~~~\forall V_h\in X_h.
\end{equation}

\begin{theorem}\label{prosp}
When $U \in H^\alpha(\Omega)\bigcap H^\beta(\Omega)$, for every $d\,(1\leq d\leq l)$, the projection operator $P^d$ satisfies
\begin{equation}\label{thm:2Dprojection:eq:1}
\frac{1}{L}\sum_{i=1}^L\left\|~_0D_x^{\frac{\alpha}{2}}\left(U_h^{n_i}-P^dU_h^{n_i}\right)\right\|^2+\frac{1}{L}\sum_{i=1}^L\left\|~_0D_y^{\frac{\beta}{2}}\left(U_h^{n_i}-P^dU_h^{n_i}\right)\right\|^2\leq C\sum_{j=d+1}^l\lambda_j,
\end{equation}
where $U_h^{n_i}$ is the solution of FE scheme.
\end{theorem}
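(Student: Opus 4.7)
The plan is to reduce the claim to the POD optimality identity. The key observation is that the bilinear form $a(\cdot,\cdot)$ in (\ref{aaaa}) is precisely the form $(\cdot,\cdot)_w$ used to build the POD basis, so the defining relation $a(P^dU,V_d)=a(U,V_d)$ for all $V_d\in W^d$ identifies $P^d$ with the $(\cdot,\cdot)_w$-orthogonal projection onto $W^d={\rm span}\{\psi_1,\ldots,\psi_d\}$. Since each snapshot $U_h^{n_i}=U_i$ lies in $V$ and admits the expansion (\ref{(2D1.1)}), the $(\cdot,\cdot)_w$-orthonormality of $\{\psi_j\}_{j=1}^l$ gives at once
\begin{equation*}
P^dU_h^{n_i}=\sum_{j=1}^d(U_i,\psi_j)_w\psi_j,\qquad U_h^{n_i}-P^dU_h^{n_i}=\sum_{j=d+1}^l(U_i,\psi_j)_w\psi_j.
\end{equation*}

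Next I would compute $\|U_h^{n_i}-P^dU_h^{n_i}\|_w^2=\sum_{j=d+1}^l|(U_i,\psi_j)_w|^2$ by orthonormality, average over $i$, and swap the order of summation to obtain
\begin{equation*}
\frac{1}{L}\sum_{i=1}^L\|U_h^{n_i}-P^dU_h^{n_i}\|_w^2=\sum_{j=d+1}^l\frac{1}{L}\sum_{i=1}^L|(U_i,\psi_j)_w|^2.
\end{equation*}
The inner average equals $(R\psi_j,\psi_j)_w$ (this is exactly the calculation carried out just before (\ref{eigenvalue})), which by (\ref{eigenvalue}) together with $(\psi_j,\psi_j)_w=1$ is $\lambda_j$. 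Hence the right-hand side collapses to $\sum_{j=d+1}^l\lambda_j$.

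It remains to convert $\|\cdot\|_w$ on the left into the two fractional seminorms appearing in the statement. Applying the Zhang identity recalled in Section~2 with $\mu=\alpha/2$ on each line of constant $y$, and with $\mu=\beta/2$ on each line of constant $x$, followed by Fubini and the relations $2C_\alpha\cos(\alpha\pi/2)=2C_\beta\cos(\beta\pi/2)=1$, I obtain the pointwise identity
\begin{equation*}
\|v\|_w^2=\|{}_0D_x^{\alpha/2}v\|^2+\|{}_0D_y^{\beta/2}v\|^2\qquad\forall v\in H_0^{\alpha/2}(\Omega)\cap H_0^{\beta/2}(\Omega).
\end{equation*}
Applying it to $v=U_h^{n_i}-P^dU_h^{n_i}$ and combining with the previous step yields the claim, in fact with $C=1$.

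I do not anticipate a genuine obstacle: the argument is the standard POD error identity, transplanted from the usual energy inner product of an elliptic problem to the fractional bilinear form $a(\cdot,\cdot)$. The only points that need verification are that $a$ and $(\cdot,\cdot)_w$ really coincide on $X_h$ (so that $P^d$ is an orthogonal projection) and that the identification $(R\psi_j,\psi_j)_w=\lambda_j$ is the same chain of computations already performed in the construction of the POD basis.
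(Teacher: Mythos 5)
Your proof is correct, and it takes a genuinely more direct route than the paper's. The paper does not exploit the fact that $a(\cdot,\cdot)$ and $(\cdot,\cdot)_w$ are literally the same bilinear form; instead it runs a C\'ea-type quasi-optimality argument: Galerkin orthogonality $a(U-P^hU,V_h)=0$, the identity $a(v,v)=\|{}_0D_x^{\alpha/2}v\|^2+\|{}_0D_y^{\beta/2}v\|^2$, and continuity of $a$ yield $\|{}_0D_x^{\alpha/2}(U-P^hU)\|+\|{}_0D_y^{\beta/2}(U-P^hU)\|\leq C\left(\|{}_0D_x^{\alpha/2}(U-V_h)\|+\|{}_0D_y^{\beta/2}(U-V_h)\|\right)$ for every competitor $V_h$, and the competitor is then chosen to be the truncated POD expansion $\sum_{j=1}^d(U_h^{n_i},\psi_j)_w\psi_j$, whose averaged $\|\cdot\|_w$-error equals $\sum_{j=d+1}^l\lambda_j$ by construction. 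You observe instead that, because $a\equiv(\cdot,\cdot)_w$, the defining relation (\ref{projection2}) makes $P^d$ the exact $(\cdot,\cdot)_w$-orthogonal projection onto $W^d$, so $P^dU_h^{n_i}$ \emph{is} the truncated expansion and no quasi-optimality step is needed; the bound then follows as an identity with $C=1$. Your norm identity $\|v\|_w^2=\|{}_0D_x^{\alpha/2}v\|^2+\|{}_0D_y^{\beta/2}v\|^2$ is the same one the paper uses implicitly, obtained from the Zhang lemma with $\mu=\alpha/2,\beta/2$ and $2C_\alpha\cos(\alpha\pi/2)=2C_\beta\cos(\beta\pi/2)=1$. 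What the paper's longer argument buys is robustness: the C\'ea route survives if the Ritz projection were defined by a bilinear form that is merely continuous and coercive rather than identical to the inner product used for the POD construction (for instance if a convection term were added). What your argument buys is a sharper constant and a shorter proof in the symmetric case actually at hand. Both proofs ultimately rest on the same two ingredients: the POD optimality identity $\frac{1}{L}\sum_{i=1}^L\sum_{j=d+1}^l|(U_i,\psi_j)_w|^2=\sum_{j=d+1}^l\lambda_j$ and the equivalence of $\|\cdot\|_w$ with the pair of one-sided fractional seminorms.
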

\begin{proof}
Since
 \begin{equation}
    a(U,V_h)=a(P^hU,V_h)~~~~~~\forall V_h\in X_h,
 \end{equation}
we have
  \begin{equation}
     a(U-P^hU,V_h)=0~~~~~~~~~~\forall V_h\in X_h.
  \end{equation}
  Moreover, according to
 \begin{equation}
 \left\|~_0D_x^{\frac{\alpha}{2}}(U-P^hU)\right\|^2+\left\|~_0D_y^{\frac{\beta}{2}}\left(U-P^hU\right)\right\|^2=a\left(U-P^hU,U-P^hU\right)
 \end{equation}
and
  \begin{equation}
    \begin{aligned}
      &a\left(U-P^hU,U-P^hU\right)\\
      =&a\left(U-P^hU,U-V_h\right)+a\left(U-P^hU,V_h-P^hU\right)\\
      =&a\left(U-P^hU,U-V_h\right)\\
      =&C_\alpha \left[\left(~_0D_x^{\frac{\alpha}{2}}\left(U-P^hU\right),~_xD_1^{\frac{\alpha}{2}}\left(U-V_h\right)\right)+\left(~_xD_1^{\frac{\alpha}{2}}\left(U-P^hU\right),~_0D_x^{\frac{\alpha}{2}}\left(U-V_h\right)\right)\right]\\
      &+C_\beta \left[\left(~_0D_y^{\frac{\beta}{2}}\left(U-P^hU\right),~_yD_1^{\frac{\beta}{2}}\left(U-V_h\right)\right)+\left(~_yD_1^{\frac{\beta}{2}}\left(U-P^hU\right),~_0D_y^{\frac{\beta}{2}}\left(U-V_h\right)\right)\right]\\
      \leq&C\left[\left\|~_0D_x^{\frac{\alpha}{2}}\left(U-P^hU\right)\right\| \left\|~_xD_1^{\frac{\alpha}{2}}\left(U-V_h\right)\right\|+\left\|~_xD_1^{\frac{\alpha}{2}}\left(U-P^hU\right)\right\|\left\|~_0D_x^{\frac{\alpha}{2}}\left(U-V_h\right)\right\|\right]\\
      &+C\left[\left\|~_0D_y^{\frac{\beta}{2}}\left(U-P^hU\right)\right\|\left\|~_yD_1^{\frac{\beta}{2}}\left(U-V_h\right)\right\|+\left\|~_yD_1^{\frac{\beta}{2}}\left(U-P^hU\right)\right\|\left\|~_0D_y^{\frac{\beta}{2}}\left(U-V_h\right)\right\|\right]\\
      \leq&C\left(\left\|~_0D_x^{\frac{\alpha}{2}}\left(U-P^hU\right)\right\|+\left\|~_0D_y^{\frac{\beta}{2}}\left(U-P^hU\right)\right\|\right)\left(\left\|~_0D_x^{\frac{\alpha}{2}}\left(U-V_h\right)\right\|+\left\|~_0D_y^{\frac{\beta}{2}}\left(U-V_h\right)\right\|\right),
    \end{aligned}
  \end{equation}
  using
  \begin{equation}
  \begin{aligned}
   &\left\|~_0D_x^{\frac{\alpha}{2}}\left(U-P^hU\right)\right\|^2+\left\|~_0D_y^{\frac{\beta}{2}}\left(U-P^hU\right)\right\|^2
   \\
   &\geq\frac{1}{2} \left(\left\|~_0D_x^{\frac{\alpha}{2}}\left(U-P^hU\right)\right\|+\left\|~_0D_y^{\frac{\beta}{2}}\left(U-P^hU\right)\right\|\right)^2,
 \end{aligned}
  \end{equation}
  we have
  \begin{equation}\label{projectionin}
  \left\|~_0D_x^{\frac{\alpha}{2}}\left(U-P^hU\right)\right\|+\left\|~_0D_y^{\frac{\beta}{2}}\left(U-P^hU\right)\right\|\leq C\left(\left\|~_0D_x^{\frac{\alpha}{2}}(U-V_h)\right\|+\left\|~_0D_y^{\frac{\beta}{2}}\left(U-V_h\right)\right\|\right).
  \end{equation}

If we take $U=U^{n_i}_h$, and let $P^h$ be restricted from $X_h$ to $W^d$, i.e., $P^hU=P^dU_h^{n_i}\in W^d$. Let $V_h=\sum_{i=1}^d\left(U_h^{n_i},\psi_j\right)_w\psi_j\in W^d\subset X_h$. Since
  \begin{equation}
  \begin{aligned}
    \frac{1}{L}\sum_{i=1}^{L}\left\|U_i-\sum_{j=1}^{d}(U_i,\psi_j)_w\psi_j\right\|_w^2&=\frac{1}{L}\sum\limits_{j=d+1}^{l}\sum\limits_{i=1}^{L}\left|(U_i,\psi_j)_{w}^{2}\right|\\
    &=\sum_{j=d+1}^l\lambda_j,
  \end{aligned}
  \end{equation}
according to (\ref{projectionin}), we have
  \begin{equation}
    \begin{aligned}
        &\frac{1}{L}\sum_{i=1}^l\left\|~_0D_x^{\frac{\alpha}{2}}\left(U_h^{n_i}-P^dU_h^{n_i}\right)\right\|^2+\frac{1}{L}\sum_{i=1}^l\left\|~_0D_y^{\frac{\beta}{2}}\left(U_h^{n_i}-P^dU_h^{n_i}\right)\right\|^2\\
        \leq&C\frac{1}{L}\sum_{i=1}^l\left\|~_0D_x^{\frac{\alpha}{2}}\left(U_h^{n_i}-\sum_{j=1}^{d}\left(U_h^{n_i},\psi_j\right)_w\psi_j\right)\right\|^2\\
        &+C\frac{1}{L}\sum_{i=1}^l\left\|~_0D_y^{\frac{\beta}{2}}\left(U_h^{n_i}-\sum_{j=1}^{d}\left(U_h^{n_i},\psi_j\right)_w\psi_j\right)\right\|^2\\
        \leq&C\frac{1}{L}\sum_{i=1}^l\left\|\left(U_h^{n_i}-\sum_{j=1}^{d}\left(U_h^{n_i},\psi_j\right)_w\psi_j\right)\right\|_w^2\\
        \leq&C\sum_{j=d+1}^l \lambda_j.
    \end{aligned}
  \end{equation}
 The proof of $(\ref{thm:2Dprojection:eq:1})$ is completed.
\end{proof}

By using $W^d={\rm span}\left\{\psi_1, \psi_2,\cdots, \psi_d\right\}$, based on POD we obtain the reduced FE formulation: Find $u_d^n\in W^d$ such that
\begin{equation}\label{eq:2DPODscheme}
\left\{
\begin{aligned}
  &\left(u^{n}_d,v_d\right)+\tau C_{\alpha}\left(~_{0}D_{x}^{\frac{\alpha}{2}}u^{n}_d,~_{x}D_{1}^{\frac{\alpha}{2}}v_d\right)+\tau C_{\alpha}\left(~_{x}D_{1}^{\frac{\alpha}{2}}u^{n}_d,~_{0}D_{x}^{\frac{\alpha}{2}}v_d\right)\\
  &~~~~+\tau C_{\beta}\left(~_{0}D_{y}^{\frac{\beta}{2}}u^{n}_d,~_{y}D_{1}^{\frac{\beta}{2}}v_d\right)+\tau C_{\beta}\left(~_{y}D_{1}^{\frac{\beta}{2}}u^{n}_d,~_{0}D_{y}^{\frac{\beta}{2}}v_d\right)\\
  &~~~~=\tau\left(f^n,v_d\right)+\left(u_d^{n-1},v_d\right) ~~~~~~~~~~~~~~~~~~~~~~~~~~~~~~~~~~~~~~~~~~~~~~~~~~~~~\forall~v_d \in W^{d},\\
  &u_d^0=P^du_h^0.
  \end{aligned}
  \right.
\end{equation}

\section{Stability analysis and error estimates for the reduced FE formulation}
Now, we perform the numerical stability analysis and provide the error estimates.
\begin{theorem}\label{thm2Derror}
Let $u_{h}^{n}\in X_{h}$ be the finite element solution of (\ref{eq:2Ddiscretescheme}), $u_{d}^{n}\in W_{d}$ the solution of the reduced FE formulation (\ref{eq:2DPODscheme}). Then we have
\begin{equation}
\left\|u_d^n\right\|^2+ (2\tau)\sum_{i=1}^{n}\left\|u_d^i\right\|^2_w\leq C\tau\sum_{i=1}^{n}\left\|f^i\right\|^2+\left\|u_d^0\right\|^2.
\end{equation}
If taking $L=O(N)$ and the snapshots being taken at uniform intervals, there exists
\begin{equation}\label{error1}
\left\|u_{h}^{n}-u_{d}^{n}\right\|\leq M L\left(\sum_{j=d+1}^{l}\lambda_{j}\right)^{1/2}+M\tau.
\end{equation}
\end{theorem}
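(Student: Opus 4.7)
The plan is to prove the stability estimate and the error estimate as two separate energy-type arguments, relying on the projection error bound already obtained in Theorem~3.1 and on the Ritz-projection identity for $P^d$.

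\textbf{Stability.} First I would test the reduced FE scheme~(\ref{eq:2DPODscheme}) with $v_d = u_d^n$. The bilinear form satisfies $a(u_d^n,u_d^n) = \|u_d^n\|_w^2$ directly from the definition of $(\cdot,\cdot)_w$ together with Lemma~2.1, so the scheme reduces to
\begin{equation*}
\|u_d^n\|^2 + \tau\|u_d^n\|_w^2 = \tau(f^n,u_d^n) + (u_d^{n-1},u_d^n).
\end{equation*}
Applying $|(u_d^{n-1},u_d^n)| \leq \tfrac12\|u_d^{n-1}\|^2 + \tfrac12\|u_d^n\|^2$ together with $|(f^n,u_d^n)| \leq \tfrac12\|f^n\|^2 + \tfrac12\|u_d^n\|^2$, moving the $\|u_d^n\|^2$ terms to the left, and summing over $n$ produces the telescoping bound, with the factor $2\tau$ coming from doubling the identity before summing.

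\textbf{Error estimate.} I would split the error via the Ritz decomposition
\begin{equation*}
u_h^n - u_d^n = \rho^n + \theta^n, \qquad \rho^n := u_h^n - P^d u_h^n, \quad \theta^n := P^d u_h^n - u_d^n \in W^d,
\end{equation*}
and bound the two pieces separately. For $\rho^n$, Theorem~3.1 gives (after removing the $1/L$ averaging) $\|\rho^{n_i}\|_w^2 \leq CL\sum_{j=d+1}^{l}\lambda_j$, and the Sobolev inequality of Lemma~2.2 upgrades this to the $L^2$ bound $\|\rho^{n_i}\| \leq C\sqrt{L}\,(\sum_{j=d+1}^{l}\lambda_j)^{1/2}$; since $L = O(N)$ with uniformly spaced snapshots, this bound is valid at every discrete time level up to a constant. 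For $\theta^n$, subtract (\ref{eq:2DPODscheme}) from (\ref{eq:2Ddiscretescheme}) tested against $v_d \in W^d$ and use the defining identity $a(\rho^n,v_d)=0$ of $P^d$ to obtain
\begin{equation*}
(\theta^n,v_d) + \tau a(\theta^n,v_d) = (\theta^{n-1},v_d) - (\rho^n - \rho^{n-1},v_d) \quad \forall\, v_d \in W^d.
\end{equation*}
Choosing $v_d = \theta^n$, using coercivity, Cauchy--Schwarz and Young, noting that $\theta^0 = 0$ by the choice $u_d^0 = P^d u_h^0$, and applying the discrete Grönwall Lemma~2.3 yields
\begin{equation*}
\|\theta^n\|^2 \leq C\sum_{k=1}^{n}\|\rho^k - \rho^{k-1}\|^2.
\end{equation*}
Since $\rho^k - \rho^{k-1} = (I-P^d)(u_h^k - u_h^{k-1})$ and since $I - P^d$ is bounded on the $H^{\alpha/2}\cap H^{\beta/2}$-scale (Remark~4.1), the standard backward-Euler regularity bound $\|u_h^k - u_h^{k-1}\|_w \leq C\tau$ gives $\|\rho^k - \rho^{k-1}\| \leq C\tau$, so summing produces the $M\tau$ contribution. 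The $ML(\sum\lambda_j)^{1/2}$ term arises from combining the pointwise projection bound $\|\rho^n\|\leq C\sqrt{L}(\sum\lambda_j)^{1/2}$ with the triangle inequality, the remaining factor $\sqrt{L}$ being picked up when one accumulates the snapshot error over $N\sim L$ steps via the $\theta^n$ estimate.

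\textbf{Main obstacle.} The technical heart of the argument is translating the averaged, $H^{\alpha/2}\cap H^{\beta/2}$-type projection bound of Theorem~3.1 into a pointwise $L^2$ bound at every time level $t_n$: one must simultaneously pay the $\sqrt{L}$ factor for removing the average, use Lemma~2.2 to drop from the energy norm to the $L^2$ norm, and exploit the hypothesis $L=O(N)$ to transfer bounds from snapshot times to arbitrary discrete times. Keeping the correct powers of $L$ and $\tau$ while absorbing the $\|\theta^n\|_w^2$ terms on the left through the Poincaré-type inequality of Lemma~2.2 is the step that demands the most care.
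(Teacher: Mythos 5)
Your stability argument is essentially identical to the paper's (test with $v_d=u_d^n$, Young's inequality, sum, discrete Gr\"onwall), so that half is fine. For the error bound you take the classical Wheeler-type splitting $u_h^n-u_d^n=\rho^n+\theta^n$ and drive the $\theta$-recursion by $\rho^n-\rho^{n-1}$, whereas the paper runs a recursion directly on the full error, obtaining $\|u_h^n-u_d^n\|\le 2\|P^du_h^n-u_h^n\|+\|u_h^{n-1}-u_d^{n-1}\|$, unrolls it, and then Taylor-expands $u_h^n$ about the nearest snapshot time so that every projection error which appears is evaluated at a snapshot, where Theorem \ref{prosp} applies; the $M\tau$ term there comes from the Taylor remainder. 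The decomposition you choose is legitimate, but your route breaks down exactly at the step you dismiss as routine.

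The gap is in the $M\tau$ contribution. From $\|\theta^n\|\le\|\theta^{n-1}\|+\|\rho^n-\rho^{n-1}\|$ and $\theta^0=0$ you get $\|\theta^n\|\le\sum_{k=1}^{n}\|\rho^k-\rho^{k-1}\|$ (note that your displayed bound $\|\theta^n\|^2\le C\sum_k\|\rho^k-\rho^{k-1}\|^2$ already hides a factor of $n$ coming from Cauchy--Schwarz). Feeding in only $\|\rho^k-\rho^{k-1}\|\le C\|(I-P^d)(u_h^k-u_h^{k-1})\|\le C\tau$ then yields $\|\theta^n\|\le Cn\tau\le CT=O(1)$, not $O(\tau)$: the uniform boundedness of $I-P^d$ discards precisely the smallness you need, because you are summing $N=T/\tau$ increments each of size $\tau$. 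To close the argument you must show that the projection error of the difference quotients $(u_h^k-u_h^{k-1})/\tau$ is itself controlled by the POD eigenvalues --- which holds only if the difference quotients are (effectively) part of the snapshot set, or, as the paper arranges, if $u_{ht}$ is rewritten as a difference quotient of two snapshot values so that $\|(I-P^d)u_{ht}\|$ reduces to $\|(I-P^d)u_h^{n_i}\|$ and $\|(I-P^d)u_h^{n_{i+1}}\|$, both covered by Theorem \ref{prosp}, plus an $O(\tau)$ Taylor remainder. Relatedly, your extension of the snapshot-time bound $\|\rho^{n_i}\|\le C\sqrt{L}\left(\sum_{j=d+1}^{l}\lambda_j\right)^{1/2}$ to every time level ``up to a constant'' is asserted rather than proved; that transfer is exactly what the paper's Taylor expansion about $t_{n_i}$ under the hypothesis $L=O(N)$ is there to justify, and it is the second place an $O(\tau)$ term originates.
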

\begin{proof}
Taking $v_d=u_d^n$ in  $(\ref{eq:2DPODscheme})$, we get
\begin{equation}
\begin{aligned}
&\left\|u_d^n\right\|^2+\tau\left\|u_d^n\right\|_w^2\leq\tau\left\|f^n\right\|\left\|u_d^n\right\|+\left\|u_d^{n-1}\right\|\left\|u_d^n\right\|\\
&\leq\frac{1}{2}\left(\tau\left\|f^n\right\|^2+\tau\left\|u_d^n\right\|^2\right)+\frac{1}{2}\left(\left\|u_d^{n-1}\right\|^2+\left\|u_d^n\right\|^2\right),
\end{aligned}
\end{equation}
which leads to
\begin{equation}
\left\|u_d^n\right\|^2+ (2\tau)\sum_{i=1}^{n}\left\|u_d^i\right\|^2_w\leq \tau\sum_{i=1}^{n}\left\|f^i\right\|^2+\left\|u_d^0\right\|^2+\tau\sum_{i=1}^{n}\left\|u_d^i\right\|^2.
\end{equation}
According to the discrete Gr\"onwall inequality, we have
\begin{equation}
\left\|u_d^n\right\|^2+ (2\tau)\sum_{i=1}^{n}\left\|u_d^i\right\|^2_w\leq C\tau\sum_{i=1}^{n}\left\|f^i\right\|^2+\left\|u_d^0\right\|^2.
\end{equation}
By (\ref{eq:2Ddiscretescheme}) and (\ref{eq:2DPODscheme}), there exists
\begin{equation}\label{eq:1Derroreq}
\begin{aligned}
    &\left(u^{n}_h-u^{n}_d,v_d\right)+\tau C_{\alpha}\left(~_{0}D_{x}^{\frac{\alpha}{2}}\left(u^{n}_h-u^{n}_d\right),~_{x}D_{1}^{\frac{\alpha}{2}}v_d\right)\\
    &+\tau C_{\alpha}\left(~_{x}D_{1}^{\frac{\alpha}{2}}\left(u^{n}_h-u^{n}_d\right),~_{0}D_{x}^{\frac{\alpha}{2}}v_d\right)+\tau C_{\beta}\left(~_{0}D_{y}^{\frac{\beta}{2}}\left(u^{n}_h-u^{n}_d\right),~_{y}D_{1}^{\frac{\beta}{2}}v_d\right)\\
    &+\tau C_{\beta}\left(~_{y}D_{1}^{\frac{\beta}{2}}\left(u^{n}_h-u^{n}_d\right),~_{0}D_{y}^{\frac{\beta}{2}}v_d\right)=\left(u^{n-1}_h-u^{n-1}_d,v_d\right).
    \end{aligned}
\end{equation}
Combining (\ref{projection2}) with (\ref{eq:1Derroreq}), we have
\begin{equation}
\begin{aligned}
&\left\|P^du_h^n-u_d^n\right\|^2+\tau\left\|~_0D_x^{\frac{\alpha}{2}}(P^du_h^n-u_d^n)\right\|^2+\tau\left\|~_0D_y^{\frac{\beta}{2}}(P^du_h^n-u_d^n)\right\|^2\\
=&\left(P^du_h^n-u_d^n,P^du_h^n-u_d^n\right)\\
&+\tau C_{\alpha}\left(~_0D_x^{\frac{\alpha}{2}}\left(P^du_h^n-u_d^n\right),~_xD_1^{\frac{\alpha}{2}}\left(P^du_h^n-u_d^n\right)\right)\\
&+\tau C_{\alpha}\left(~_xD_1^{\frac{\alpha}{2}}\left(P^du_h^n-u_d^n\right),~_0D_x^{\frac{\alpha}{2}}\left(P^du_h^n-u_d^n\right)\right)\\
&+\tau C_{\beta}\left(~_0D_y^{\frac{\beta}{2}}\left(P^du_h^n-u_d^n\right),~_yD_1^{\frac{\beta}{2}}\left(P^du_h^n-u_d^n\right)\right)\\
&+\tau C_{\beta}\left(~_yD_1^{\frac{\beta}{2}}\left(P^du_h^n-u_d^n\right),~_0D_y^{\frac{\beta}{2}}\left(P^du_h^n-u_d^n\right)\right)\\
=&\left(P^du_h^n-u_h^n,P^du_h^n-u_d^n\right)+\left(u_h^n-u_d^n,P^du_h^n-u_d^n\right)\\
&+\tau C_{\alpha}\left(~_0D_x^{\frac{\alpha}{2}}\left(P^du_h^n-u_h^n\right),~_xD_1^{\frac{\alpha}{2}}\left(P^du_h^n-u_d^n\right)\right)\\
&+\tau C_{\alpha}\left(~_0D_x^{\frac{\alpha}{2}}\left(u_h^n-u_d^n\right),~_xD_1^{\frac{\alpha}{2}}\left(P^du_h^n-u_d^n\right)\right)\\
&+\tau C_{\alpha}\left(~_xD_1^{\frac{\alpha}{2}}\left(P^du_h^n-u_h^n\right),~_0D_x^{\frac{\alpha}{2}}\left(P^du_h^n-u_d^n\right)\right)\\
&+\tau C_{\alpha}\left(~_xD_1^{\frac{\alpha}{2}}\left(u_h^n-u_d^n\right),~_0D_x^{\frac{\alpha}{2}}\left(P^du_h^n-u_d^n\right)\right)\\
&+\tau C_{\beta}\left(~_0D_y^{\frac{\beta}{2}}\left(P^du_h^n-u_h^n\right),~_yD_1^{\frac{\beta}{2}}\left(P^du_h^n-u_d^n\right)\right)\\
&+\tau C_{\beta}\left(~_0D_y^{\frac{\beta}{2}}\left(u_h^n-u_d^n\right),~_yD_1^{\frac{\beta}{2}}\left(P^du_h^n-u_d^n\right)\right)\\
&+\tau C_{\beta}\left(~_yD_1^{\frac{\beta}{2}}\left(P^du_h^n-u_h^n\right),~_0D_y^{\frac{\beta}{2}}\left(P^du_h^n-u_d^n\right)\right)\\
&+\tau C_{\beta}\left(~_yD_1^{\frac{\beta}{2}}\left(u_h^n-u_d^n\right),~_0D_y^{\frac{\beta}{2}}\left(P^du_h^n-u_d^n\right)\right)\\
=&\left(P^du_h^n-u_h^n,P^du_h^n-u_d^n\right)+\left(u^{n-1}_h-u^{n-1}_d,P^du_h^n-u_d^n\right).
\end{aligned}
\end{equation}
According to Cauchy-Schwartz inequality, there exists
\begin{equation}
\|P^du_h^n-u_d^n\|\leq \|P^du_h^n-u_h^n\|+\|u_h^{n-1}-u_d^{n-1}\|.
\end{equation}
Further using triangle inequality,
\begin{equation}
\left\|u_{h}^{n}-u_{d}^{n}\right\|\leq \left\|P^du_h^n-u_h^n\right\|+\left\|P^du_h^n-u_d^n\right\|,
\end{equation}
we get
\begin{equation}\label{eq2d}
\left\|u_{h}^{n}-u_{d}^{n}\right\|\leq 2\left\|P^du_h^n-u_h^n\right\|+\left\|u_h^{n-1}-u_d^{n-1}\right\|.
\end{equation}
Summing (\ref{eq2d}) for $1,2,\cdots,n$, squaring, and using Harmonic inequality, we get
\begin{equation}
\left\|u_{h}^{n}-u_{d}^{n}\right\|^2\leq 4n\sum_{i=1}^{n}\left\|P^du_h^i-u_h^i\right\|^2.
\end{equation}

For $1\leq n\leq N$, we assume $n_i\leq n\leq n_{i+1}\leq N \,(i=1,2,\ldots,L-1)$ and $n_i\leq n\leq\frac{n_i+n_{i+1}}{2}$, then expanding $u_h^n$ into Taylor series about $t_{n_i}$ yields that
\begin{equation}
u_h^n=u_h^{n_i}+\varepsilon_i\tau u_{ht}(\xi_i),  ~~~~~~t_{n_i}\leq \xi_i \leq t_n, ~~i=1,2,\cdots,L,
\end{equation}
where $\varepsilon_i$ is the step number from $t_{n_i}$ to $t_n$. If snapshots are taken at uniform intervals, then $|\varepsilon_i|\leq \frac{N}{2L}$ and
\begin{equation}\label{equ:taylor1}
\left\|u_{h}^{n}-u_{d}^{n}\right\|^2\leq MN\frac{N}{L}\sum_{i=n_1}^{n_i}\left\|P^du_h^i-u_h^i\right\|^2+MN\varepsilon_i^2\tau^2\sum_{i=1}^{n}\left\|P^du_{ht}(\xi_i)-u_{ht}(\xi_i)\right\|^2.
\end{equation}
For the second term of (\ref{equ:taylor1}), we have the following estimate,
\begin{equation}\label{equ:taylor2}
\begin{aligned}
  &\sum_{i=1}^{n}\|P^du_{ht}(\xi_i)-u_{ht}(\xi_i)\|^2\\
  \leq&C\frac{N}{L}\sum_{i=1}^{L-1}\left\|P^d\left(\frac{u^{n_{i+1}}_{h}-u^{n_i}_{h}}{\varepsilon_i\tau}\right)-\frac{u^{n_{i+1}}_{h}-u^{n_i}_{h}}{\varepsilon_i\tau}+O(\tau)\right\|^2\\
  \leq&C\frac{N}{L(\varepsilon_i\tau)^2}\sum_{i=1}^{L-1}\left\|P^d\left(u^{n_{i+1}}_{h}-u^{n_i}_{h}\right)-\left(u^{n_{i+1}}_{h}-u^{n_i}_{h}\right)+O(\tau^2)\right\|^2
\end{aligned}
\end{equation}
According to (\ref{equ:taylor2}) and (\ref{equ:taylor1}), we get
\begin{equation}
  \begin{aligned}
    &\left\|u_{h}^{n}-u_{d}^{n}\right\|^2\\
    \leq& MN\frac{N}{L}\sum_{i=n_1}^{n_i}\left\|P^du_h^i-u_h^i\right\|^2+MN\varepsilon_i^2\tau^2\sum_{i=1}^{n}\left\|P^du_{ht}(\xi_i)-u_{ht}(\xi_i)\right\|^2\\
    \leq& M\frac{N^2}{L}\sum_{i=n_1}^{n_i}\left\|P^du_h^i-u_h^i\right\|^2+O(\tau^2)
  \end{aligned}
\end{equation}
If taking $L=O(N)$, according to fractional Poincare inequality and Theorem \ref{prosp}, there exists
\begin{equation}
\left\|u_{h}^{n}-u_{d}^{n}\right\|\leq M L\left(\sum_{j=d+1}^{l}\lambda_{j}\right)^{1/2}+M\tau.
\end{equation}
\end{proof}
\begin{theorem}
Let $u^n$ be the exact solution of Eq. (\ref{equation2D}) and $u_{d}^{n}$  the solution of the reduced formulation (\ref{eq:2DPODscheme}). Then we have
\begin{equation}
\left\|u^n-u_{d}^{n}\right\|\leq M L\left(\sum_{j=d+1}^{l}\lambda_{j}\right)^{1/2}+M\tau+M h^{k+1-\gamma},
\end{equation}
where $\gamma=\max(\alpha,\beta)$.
\end{theorem}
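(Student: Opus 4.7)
The plan is to bound $\|u^n - u_d^n\|$ by inserting the classical FE solution $u_h^n$ and applying the triangle inequality
\[
\|u^n - u_d^n\| \;\leq\; \|u^n - u_h^n\| \;+\; \|u_h^n - u_d^n\|.
\]
This reduces the problem to two separate estimates: a standard FE error estimate for $\|u^n - u_h^n\|$ (relating the continuous solution to the FE solution on the full space $X_h$) and the POD error estimate for $\|u_h^n - u_d^n\|$ (relating the FE solution to the reduced POD solution on $W^d$), which has already been proved in Theorem \ref{thm2Derror}.

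First, I would invoke Theorem \ref{thm2Derror} to obtain
\[
\|u_h^n - u_d^n\| \;\leq\; M L\Bigl(\sum_{j=d+1}^{l}\lambda_j\Bigr)^{1/2} + M\tau,
\]
which contributes the first two terms in the desired bound. For the first term $\|u^n - u_h^n\|$, I would appeal to the classical finite element error estimate for the backward-Euler discretization \eqref{eq:2Ddiscretescheme} of \eqref{equation2D}, as established in \cite{Bu2014}. The backward-Euler discretization yields a temporal error of order $O(\tau)$, while the $P_m$ finite element interpolation in the fractional Sobolev spaces $H_0^{\alpha/2}(\Omega)\cap H_0^{\beta/2}(\Omega)$ gives an $L^2$ spatial error of order $O(h^{k+1-\gamma})$ with $\gamma = \max(\alpha,\beta)$ (the loss of $\gamma$ powers of $h$ reflects the gap between the natural energy norm $\|\cdot\|_w$ and the $L^2$ norm together with the regularity of the solution). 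Taking $U = u^n$ and $V_h$ to be the interpolant/projection onto $X_h$ in the Galerkin orthogonality argument, and then applying a duality (Aubin--Nitsche) argument adapted to the Riesz fractional bilinear form $a(\cdot,\cdot)$ in \eqref{aaaa}, yields
\[
\|u^n - u_h^n\| \;\leq\; M\tau + M h^{k+1-\gamma}.
\]

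Combining the two bounds gives the stated estimate. The main obstacle in this proof is not the triangle inequality step itself but the assembly of a clean FE error bound $\|u^n - u_h^n\| \le M\tau + M h^{k+1-\gamma}$ for the fractional problem, which requires that the exact solution have sufficient regularity (so that the interpolation estimate in $H^{\alpha/2}\cap H^{\beta/2}$ is valid) and that the duality argument can be carried out for the non-symmetric Riesz bilinear form; since both ingredients are already available from \cite{Bu2014}, I would simply cite that result and combine the bounds. A minor point to check is that the constants $M$ appearing in the two contributions are independent of $n$, $h$, $\tau$, and $d$, which follows because the POD estimate in Theorem \ref{thm2Derror} was obtained with constants uniform in $n$, and the FE estimate of \cite{Bu2014} has the analogous property under the usual assumptions on $f$, $g$, and the regularity of $u$.
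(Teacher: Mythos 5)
Your proposal is correct and follows exactly the paper's argument: split $\|u^n-u_d^n\|$ by the triangle inequality through $u_h^n$, bound the POD part by Theorem \ref{thm2Derror}, and cite \cite{Bu2014} for the classical FE estimate $\|u^n-u_h^n\|\leq C(\tau+h^{k+1-\gamma})$. The extra commentary on how the cited FE bound is derived goes beyond what the paper includes but does not change the route.
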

\begin{proof}
According to \cite{Bu2014}, we have
\begin{equation}
  \left\|u^n-u_{h}^{n}\right\|\leq C\left(\tau+h^{k+1-\gamma}\right).
\end{equation}
Combining Theorem \ref{thm2Derror} and the triangle inequality leads to the desired result.
\end{proof}
\begin{remark}
Since the term $L\left(\sum\limits_{j=d+1}^{l}\lambda_{j}\right)^{1/2}$ is produced by reduced-order and the error formula, it implies that $L$ isn't too large. At the same time, the error estimate provides a measurement for deciding the number of POD bases needed, namely, the number $d$ of POD bases should satisfy $L\left(\sum\limits_{j=d+1}^{l}\lambda_{j}\right)^{1/2}\leq max\{\tau, h^{k+1-\gamma}\}$ when $L=O(N)$ for obtaining the optimal convergence order.
\end{remark}

\section{Numerical experiments}
In this section, we verify the effectiveness of the algorithm and show the advantage of the reduced POD FE formulation by numerical examples.
\begin{example}\label{example1}
Consider the exact solution of (\ref{equation2D}) as follows
\begin{equation}
u=4\cos(1.5\pi/2)\cos(1.6\pi/2)\exp(-t)\sin(2\pi x)^2\sin(2\pi y)^2.
 \end{equation}
 We take $\Omega=[0,1]\times[0,1]$, $\alpha=1.5$, $\beta=1.6$, and $T=1$. The source term $f$ can be calculated numerically. Firstly, we divide the field $\Omega$ into 256 squares with side length $\bigtriangleup x= \bigtriangleup y=1/16$, and then link the diagonal of the square to divide each square into two triangles in the same direction which constitute triangulation $\{\Im_{h}\}$, and the time step size is taken as $\tau=1/256$.

 A group of numerical solutions are obtained by the classical finite element, and then 17 snapshots are chosen at an uniform intervals from 256 transient solutions. Figure \ref{fig:2DT1stsumoflambda} shows the change trend of $\sum_{i=d+1}^{17}\lambda_i$ as the number $d$ of POD basis increases; combining with the theoretical error estimate, we only need 1 POD basis to satisfy the requested accuracy. When $T=1$, we find that $16\times 16\times 2=512$ degrees of freedom are needed and the required computing time is about 2.58 seconds for the usual finite element method; while for the reduced FE formulation, 1 degree of freedom is needed and the corresponding time is about 0.043 seconds, which shows that our method can save memory and computing time effectively. Figure \ref{fig:2DT1stPODsol} and \ref{fig:2DT1streal} depict the POD solution and the real solution graphically when $T=1$, respectively, and it can be found that the POD solution is visually the same as the real solution. The numerical results confirm the  theoretical analysis.

\begin{figure}[h]
  \begin{center}

  \includegraphics[width=6.4cm,height=4.8cm,angle=0]{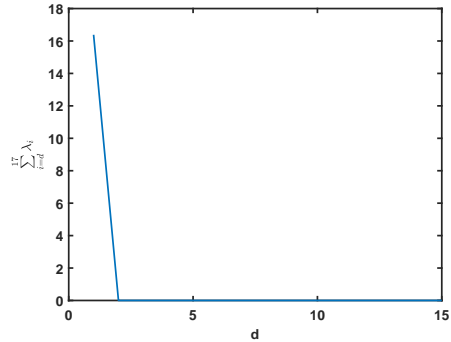}
  \caption{Sum of $\lambda_i$}\label{fig:2DT1stsumoflambda}
  \end{center}
\end{figure}

\begin{figure}[h]
  \begin{center}

  \includegraphics[width=6.4cm,height=4.8cm,angle=0]{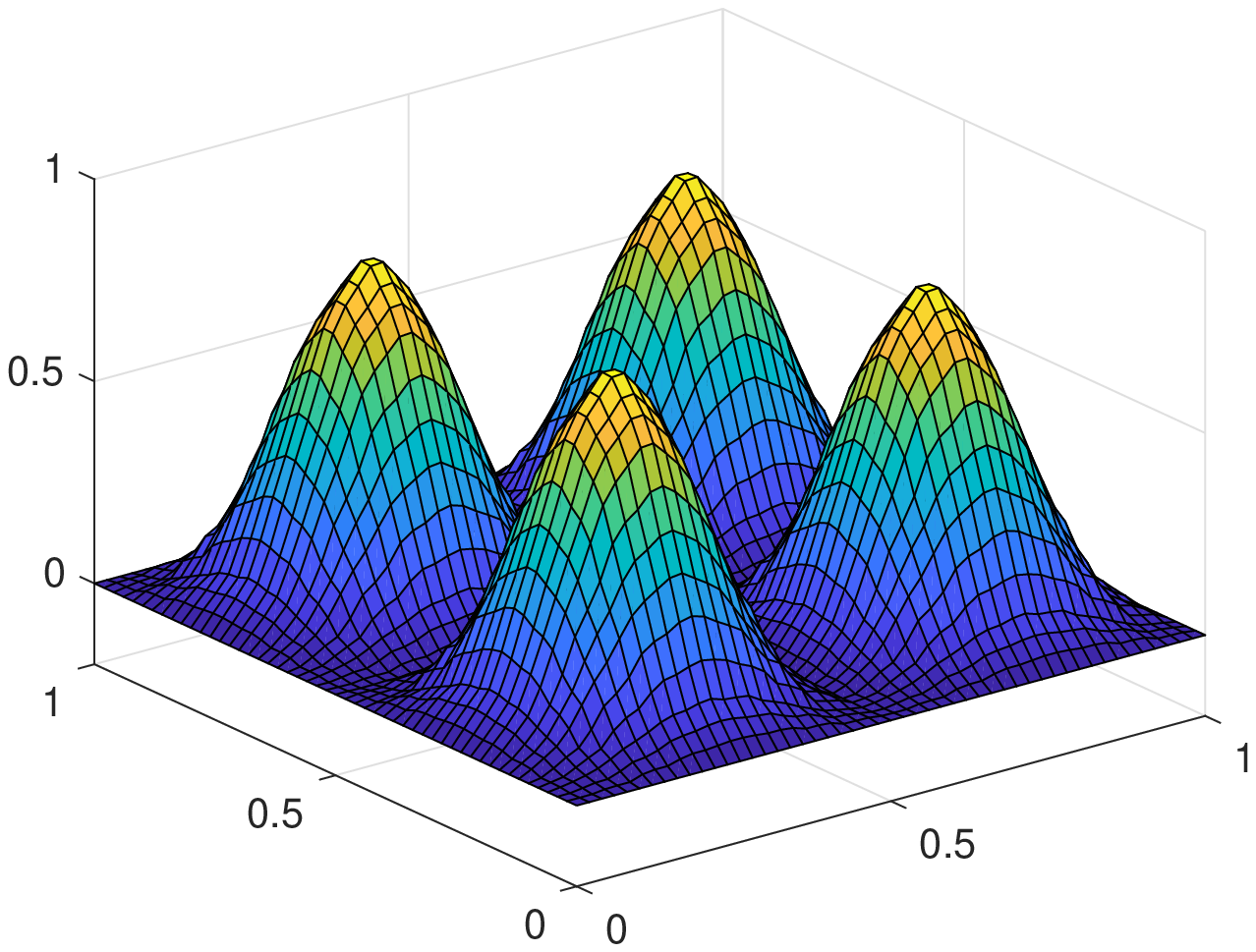}
  \caption{POD solution when $T=1$}\label{fig:2DT1stPODsol}
  \end{center}
\end{figure}

\begin{figure}[h]
  \begin{center}

  \includegraphics[width=6.4cm,height=4.8cm,angle=0]{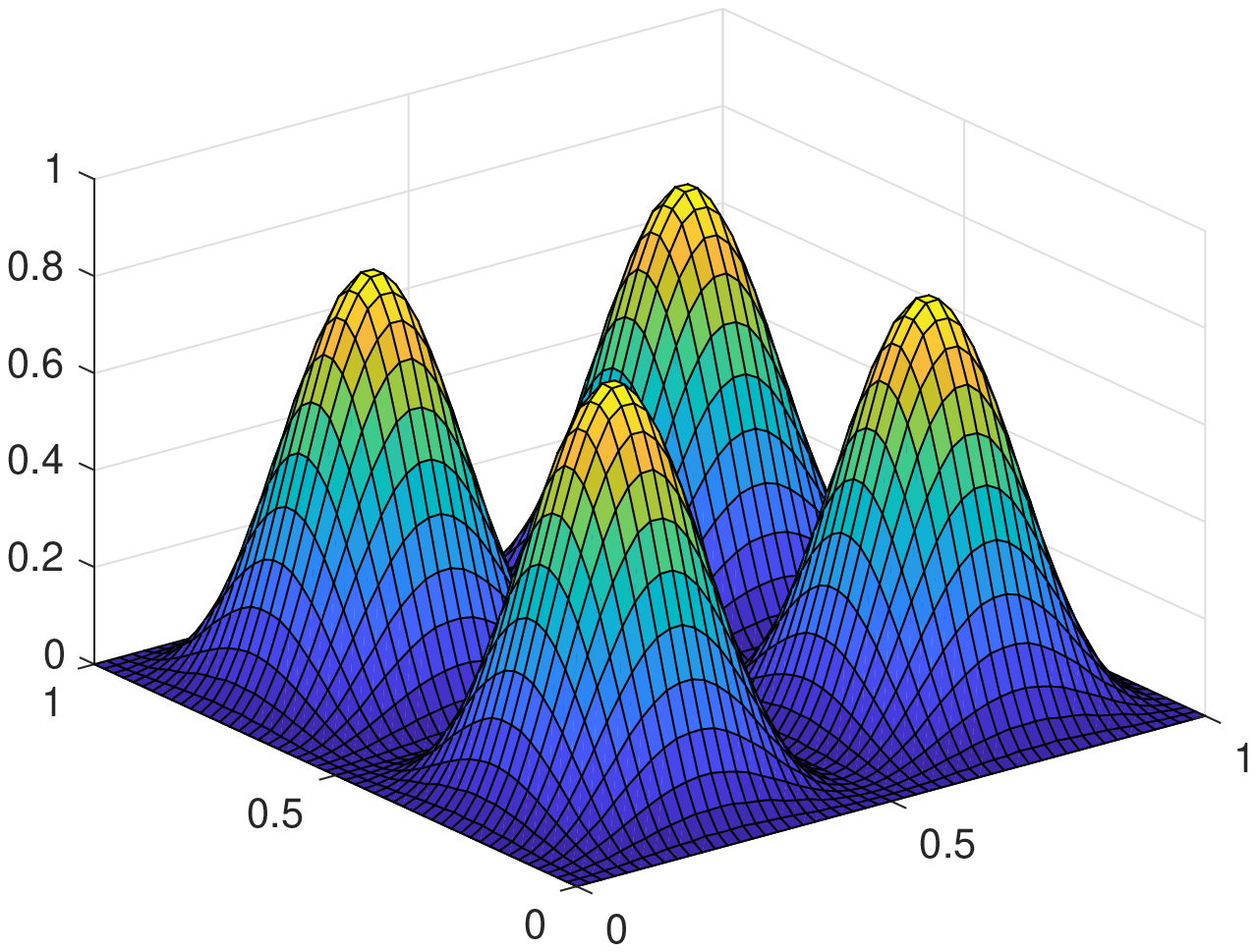}
  \caption{Real solution when $T=1$}\label{fig:2DT1streal}
  \end{center}
\end{figure}
\end{example}
Next, we give an exact solution whose shape changes sharply with the time evolution. In this case, usually the adaptive finite element method is adopted to do simulation. Here we show that the reduced FE method can also do it very well.

\begin{example}
The exact solution of (\ref{equation2D}) is taken as
\begin{equation}
u=4\times 10^3\cos(1.5\pi/2)\cos(1.6\pi/2)\exp\left(-\frac{(x-t)^2+(y-t)^2}{0.04}\right)x^2(x-1)^2y^2(1-y)^2.
 \end{equation}
 Here, we take $\Omega=[0,1]\times[0,1]$, $\alpha=1.5 $, $\beta=1.6$, and $T=1$. The mesh is generated by the same way as Example \ref{example1} and the time step size is $1/256$. The source term can be calculated numerically.

The finite element solutions are obtained with $h=1/16$ and $\tau=1/256$. We choose 34 values from 256 values; every 7 values compose of a set of snapshots. Figure \ref{fig:2DT1time} presents the CPU time of the general FE scheme and the reduced FE system; it can be found that the computation time can be reduced significantly by using the reduced system. Figure \ref{fig:2DT1error} shows the change of the error of the POD solution when using different number $d$ of POD bases; it can be noted that the error decreases as the number $d$ of POD bases increases. Figure \ref{fig:2DT1sumoflambda} shows the trend of $\sum_{i=d}^{34}\lambda_i$ as the number $d$ of POD basis increases, being consistent with the theoretical estimate. Furthermore as error estimate predicts, by combining Figures \ref{fig:2DT1error} and \ref{fig:2DT1sumoflambda}, one can notice  that the error depends on $\sum_{i=d+1}^{34}\lambda_i$.
%
%
 Figures \ref{fig:2DT1PODSol} and \ref{fig:2DT1real} depict the POD solution and the real solution graphically when $T=1$; from them, it can be noted that the POD solution is visually the same as the real solution, which shows the effectiveness of the provided algorithm.
\begin{figure}[h]
  \begin{center}

  \includegraphics[width=6.4cm,height=4.8cm,angle=0]{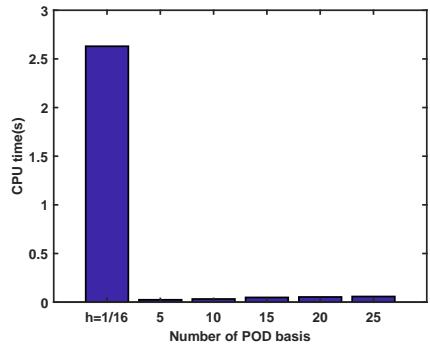}
  \caption{Used CPU Time}\label{fig:2DT1time}
  \end{center}
\end{figure}

\begin{figure}[h]
  \begin{center}

  \includegraphics[width=6.4cm,height=4.8cm,angle=0]{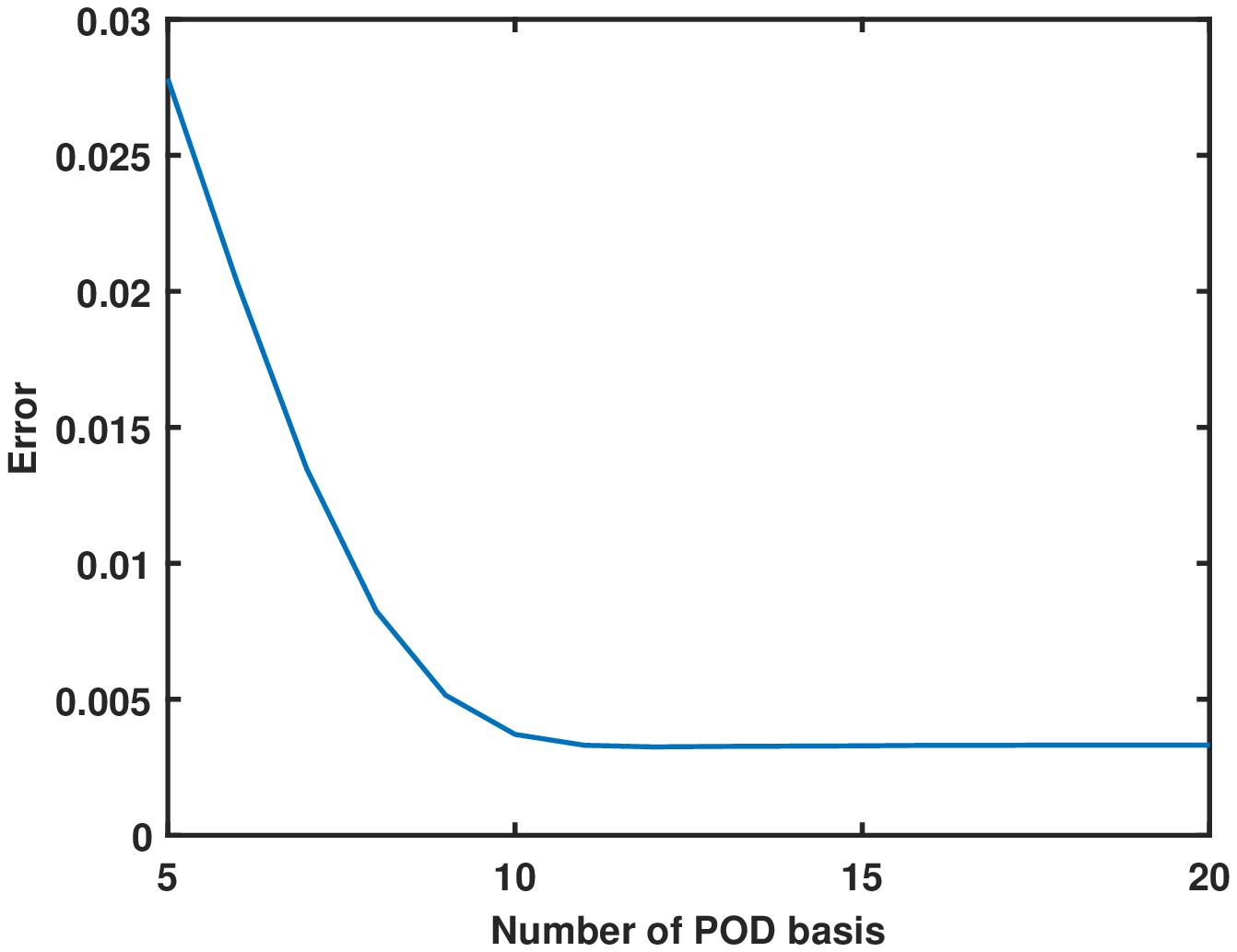}
  \caption{Errors}\label{fig:2DT1error}
  \end{center}
\end{figure}

\begin{figure}[h]
  \begin{center}

  \includegraphics[width=6.4cm,height=4.8cm,angle=0]{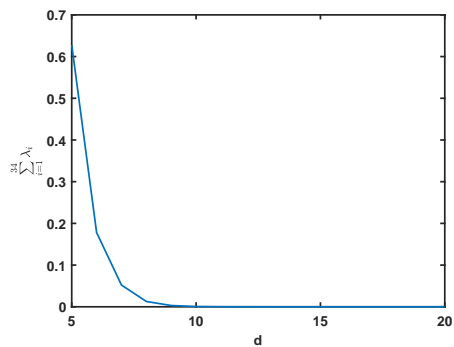}
  \caption{Sum of $\lambda$}\label{fig:2DT1sumoflambda}
  \end{center}
\end{figure}

\begin{figure}[h]
  \begin{center}

  \includegraphics[width=6.4cm,height=4.8cm,angle=0]{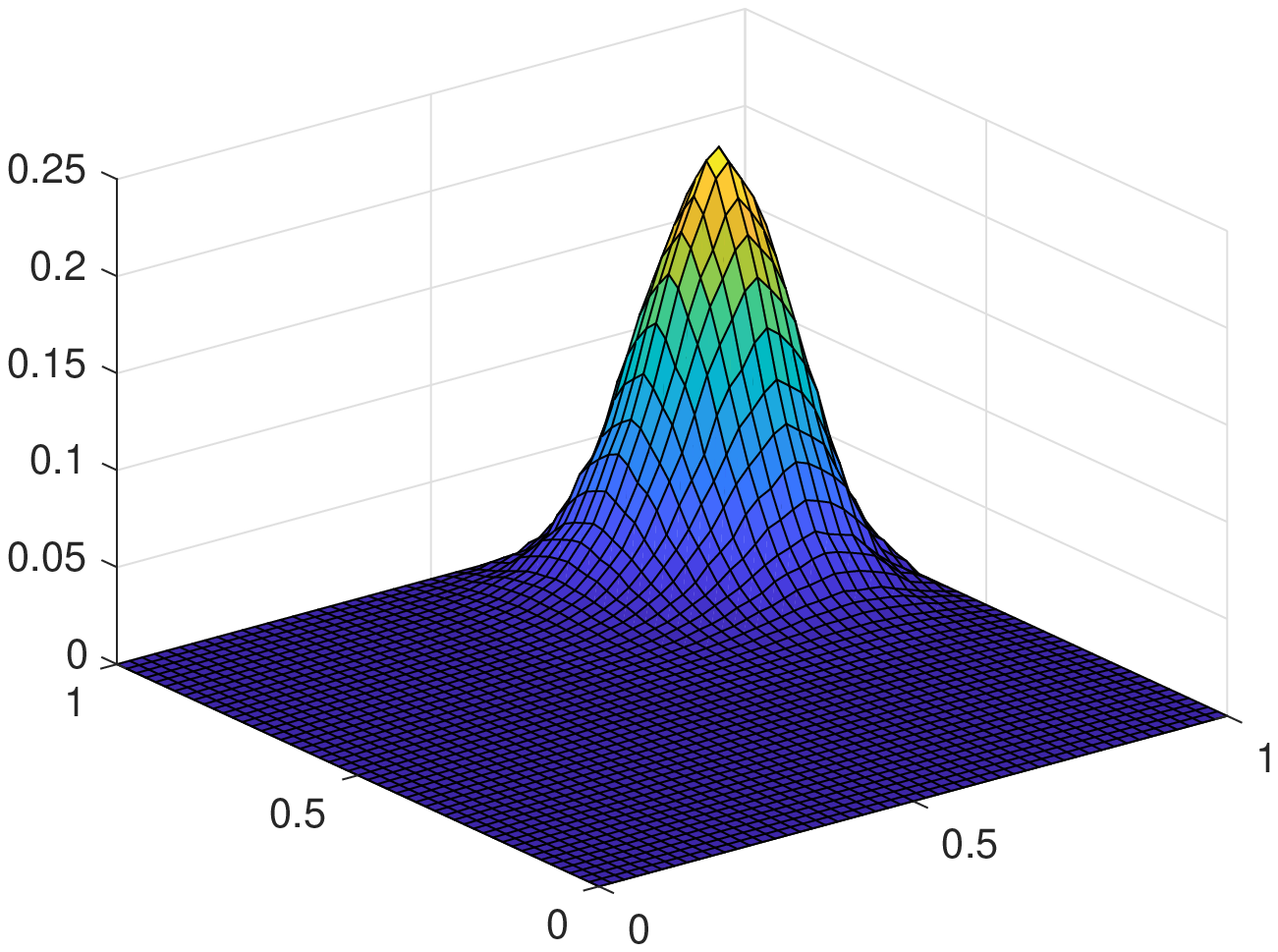}
   \caption{POD solution when $T=1$}\label{fig:2DT1PODSol}
  \end{center}
\end{figure}
\end{example}

\begin{figure}[h]
  \begin{center}

  \includegraphics[width=6.4cm,height=4.8cm,angle=0]{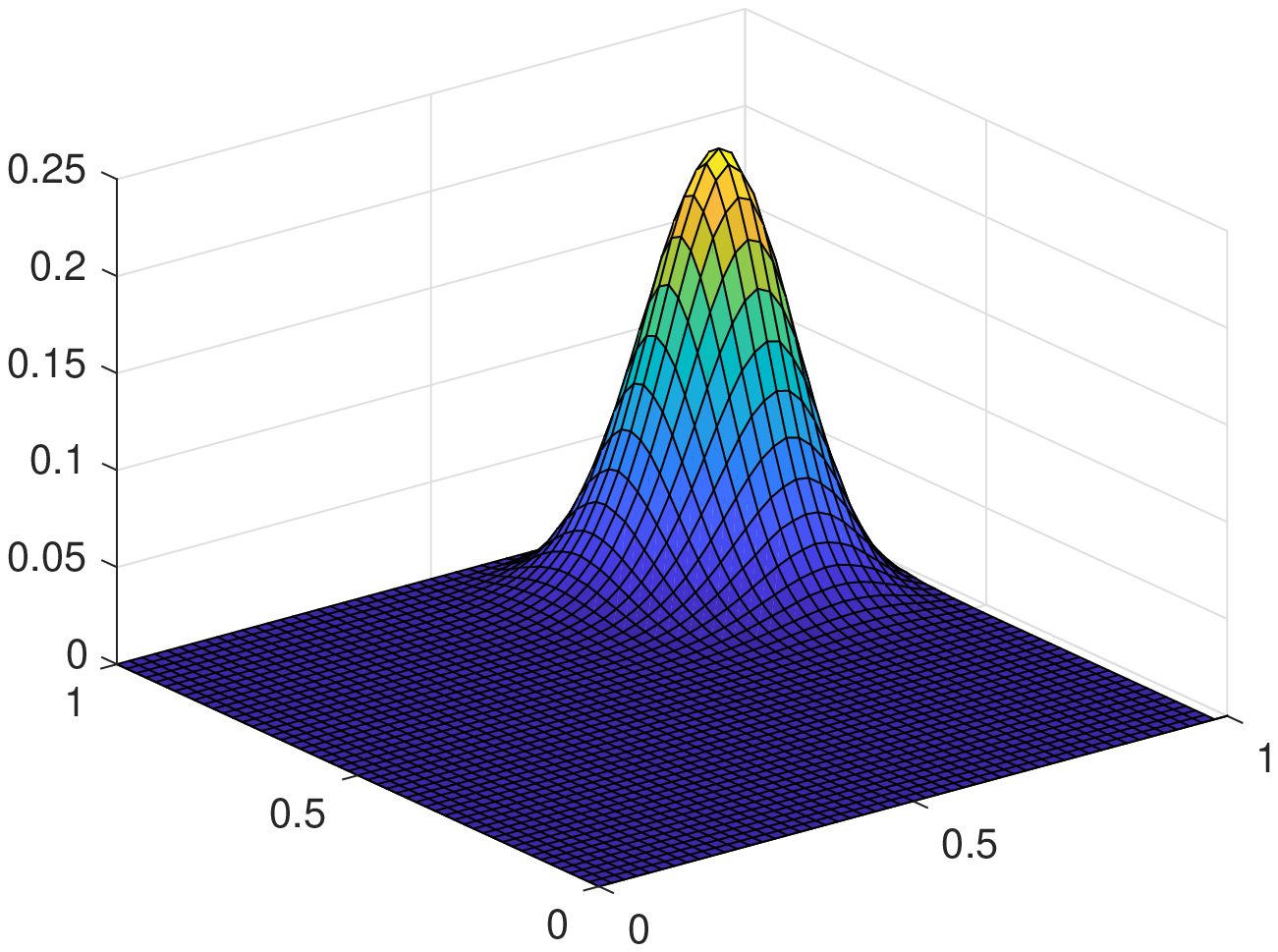}
  \caption{Real solution when $T=1$}\label{fig:2DT1real}
  \end{center}
\end{figure}

\section{conclusion}

This paper provides the basic framework for solving space FPDEs by reduced FE model. The basic strategy of choosing the reduced basis is provided. The detailed numerical stability analysis and error estimates are proposed for the reduced model. To show the effectiveness of the reduced model in alleviating computational load, saving memory, and keeping accuracy, extensive numerical experiments are performed, which also confirm the theoretical results.
 In the future study, we will use the reduced FE method to solve large-scale models to reduce the memory storage requirements.

\section*{Acknowledgments}
 This work was supported by the National Natural Science Foundation of China under Grant No. 11671182, and the Fundamental Research Funds for the Central Universities under Grant No. lzujbky-2017-ot10.


\end{document}